\newtheorem{theoremcounter}{Theorem Counter}[section]
\theoremstyle{definition}
\newtheorem{definition}[theoremcounter]{Definition}
\newtheorem{example}[theoremcounter]{Example}
\theoremstyle{plain}
\newtheorem{lemma}[theoremcounter]{Lemma}
\newtheorem{proposition}[theoremcounter]{Proposition}
\newtheorem{corollary}[theoremcounter]{Corollary}
\newtheorem{theorem}[theoremcounter]{Theorem}
\numberwithin{equation}{section}
\newcommand{\Z}{\mathbb{Z}}
\newcommand{\Q}{\mathbb{Q}}
\DeclareMathOperator{\ReNew}{Re}
\renewcommand{\Re}{\ReNew}
\def\st#1#2{\genfrac{[}{]}{0pt}{}{#1}{#2}}
\def\sts#1#2{\genfrac{\{}{\}}{0pt}{}{#1}{#2}}
\begin{document}


\title{Asymptotic coefficients of multiple zeta functions at the origin and generalized Gregory coefficients}

\author{Toshiki Matsusaka}
\address[Toshiki Matsusaka]{Faculty of Mathematics, Kyushu University,
Motooka 744, Nishi-ku, Fukuoka 819-0395, Japan}
\email{matsusaka@math.kyushu-u.ac.jp} 

\author{Hideki Murahara}
\address[Hideki Murahara]{The University of Kitakyushu,  4-2-1 Kitagata, Kokuraminami-ku, Kitakyushu, Fukuoka, 802-8577, Japan}
\email{hmurahara@mathformula.page}

\author{Tomokazu Onozuka}
\address[Tomokazu Onozuka]{Institute of Mathematics for Industry, Kyushu University 744, Motooka, Nishi-ku, Fukuoka, 819-0395, Japan} \email{t-onozuka@imi.kyushu-u.ac.jp}

\subjclass[2020]{Primary 11M32; Secondary 11B68}



\maketitle

\begin{abstract}
    Due to their singularities, multiple zeta functions behave sensitively at non-positive integer points. 
    In this article, we focus on the asymptotic behavior at the origin $(0,\dots, 0)$ and unveil the generating series of the asymptotic coefficients as a generalization of the classical Gregory coefficients. 
    This enables us to reveal the underlying symmetry of the asymptotic coefficients. Additionally, we extend the relationship between the asymptotic coefficients and the Gregory coefficients to include Hurwitz multiple zeta functions.
\end{abstract}


\section{Introduction}
The Euler--Zagier multiple zeta function is defined by
\begin{align*}
 \zeta(s_{1},\dots,s_{r})
 =\sum_{1\le n_{1}<\cdots<n_{r}}
  \frac{1}{n_{1}^{s_{1}}\cdots n_{r}^{s_{r}}},
\end{align*}
where $s_{i}\in\mathbb{C}$ $(i=1,\ldots,r)$ are complex variables.
Matsumoto~\cite{Matsumoto2002} proved that the series is absolutely convergent in the domain
\[
 \{(s_{1},\ldots,s_{r})\in\mathbb{C}^{r}: \Re(s_{l}+\cdots+s_{r})>r-l+1\;\; (l=1,\dots,r)\}.
\]
Akiyama, Egami, and Tanigawa~\cite{AET2001} and Zhao~\cite{Zhao2000} independently proved that $\zeta(s_{1},\dots,s_{r})$ is meromorphically continued to the whole space $\mathbb{C}^{r}$.
Furthermore, all possible singularities of $\zeta(s_{1},\dots,s_{r})$ are located on $s_{l}+\cdots+s_{r}\in\mathbb{Z}_{\le r-l+1}$ for $l=1,\dots,r$.

The asymptotic behavior of multiple zeta functions at non-positive integers was investigated by the third author \cite[Theorem 2]{Onozuka2013}. 
More recently, the second and third authors generalized the results for the Huwritz--Lerch multiple zeta functions \cite[Theorem 1.1]{MuraharaOnozuka2022}. 
First, we recall the result in the special case of multiple zeta functions at the origin $(0, \dots, 0)$, which is the focus of this article.

\begin{theorem}[{\cite{Onozuka2013, MuraharaOnozuka2022}}] \label{theorem:a}
	Let $r \geq 2$ and $\epsilon_1, \dots, \epsilon_r$ be complex numbers. Suppose that $|\epsilon_1|, \dots, |\epsilon_r|$ are sufficiently small with $\epsilon_j \neq 0$, $\epsilon_j + \cdots + \epsilon_r \neq 0$ $(j=1,\dots, r)$, and $|\epsilon_k/(\epsilon_j + \cdots + \epsilon_r)| \ll 1$ as $(\epsilon_1, \dots, \epsilon_r) \to (0, \dots, 0)$ for $1 \leq j \leq k \leq r$. 
    Then we have
	\begin{align}\label{MurOno-asymptotic}
	\begin{split}
		\zeta(\epsilon_1, \dots, \epsilon_r) &= (-1)^r \sum_{d_1, \dots, d_{r-1} \in \{0,1\}} \sum_{(n_1, \dots, n_r) \in S^{(d_1, \dots, d_{r-1})}} \frac{B_{n_1}}{n_1!} \cdots \frac{B_{n_r}}{n_r!} \\
			&\qquad \cdot \prod_{\substack{1 \leq j \leq r-1 \\ d_j=1}} \frac{\epsilon_{j+1} + \cdots + \epsilon_r}{\epsilon_j + \cdots + \epsilon_r} + \sum_{j=1}^r O(|\epsilon_j|),
	\end{split}
	\end{align}
	where $B_n$ is the $n$-th Bernoulli number defined by
	\[
		\sum_{n=0}^\infty B_n \frac{x^n}{n!} = \frac{x e^x}{e^x- 1}
	\]
	and the set $S^{(d_1, \dots, d_{r-1})}$ by
	\[
		S^{(d_1, \dots, d_{r-1})} 
            = \bigcap_{j=1}^{r-1} S_{j,r}^{(d_j)}
	\]
        with
	\begin{align*}
		S_{j,r}^{(d_j)} = \begin{cases}
			\{(n_1, \dots, n_r) \in \Z_{\geq 0}^r : n_1 + \cdots + n_r = r, n_{j+1} + \cdots + n_r \leq r-j\} &\text{if } d_j=0,\\
			\{(n_1, \dots, n_r) \in \Z_{\geq 0}^r : n_1 + \cdots + n_r = r, n_1+\cdots + n_j < j\} &\text{if } d_j=1.
		\end{cases}
	\end{align*}
\end{theorem}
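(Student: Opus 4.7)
The plan is to proceed by induction on the depth $r$, using a Mellin--Barnes-type recursion to reduce the depth-$r$ asymptotic to a depth-$(r-1)$ one. The base case $r = 2$ can be handled by applying the Euler--Maclaurin summation formula to the inner sum of
\[
\zeta(s_1, s_2) = \sum_{n_2 \ge 2} n_2^{-s_2} \sum_{n_1 = 1}^{n_2 - 1} n_1^{-s_1}
\]
and then analysing the resulting outer sum near $(s_1, s_2) = (0,0)$. Direct computation produces the two-term sum indexed by $d_1 \in \{0,1\}$ with Bernoulli-number coefficients: the $d_1 = 0$ branch originates from the regular part of Euler--Maclaurin, and the $d_1 = 1$ branch originates from the simple pole of $\zeta(s_1, s_2)$ along $s_1 + s_2 = 1$.

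For the inductive step, I would apply a Mellin--Barnes identity of the form
\[
\zeta(s_1, \dots, s_r) = \frac{1}{2\pi i}\int_{(c)} \frac{\Gamma(s_r + z)\Gamma(-z)}{\Gamma(s_r)}\, \zeta(s_1, \dots, s_{r-2}, s_{r-1}+s_r+z)\, \zeta(-z)\, dz
\]
along a suitable vertical contour, then shift the contour to the right. The residues at the non-negative integers $z = n$ coming from $\Gamma(-z)$ introduce factors $\zeta(-n) = -B_{n+1}/(n+1)$ and feed the $d_{r-1} = 0$ branch, while the residue at the moving pole of the interior zeta on $s_{r-1} + s_r + z = 1$ yields the rational factor $\epsilon_r/(\epsilon_{r-1} + \epsilon_r)$ and feeds the $d_{r-1} = 1$ branch. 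The inductive hypothesis applied to the depth-$(r-1)$ zeta then attaches a sum over $(d_1, \dots, d_{r-2}) \in \{0,1\}^{r-2}$, which together with the new $d_{r-1}$ reconstitutes the full sum over $\{0,1\}^{r-1}$.

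The combinatorial description of the sets $S^{(d_1, \dots, d_{r-1})}$ would then emerge from bookkeeping of Bernoulli indices: the constraint $n_1 + \cdots + n_j < j$ when $d_j = 1$ reflects which indices are forced to be small for the relevant residue to contribute, whereas $n_{j+1} + \cdots + n_r \le r - j$ when $d_j = 0$ encodes the analogous restriction on the regular branch; the homogeneity $n_1 + \cdots + n_r = r$ comes from matching total orders in the expansion.

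The principal obstacles are combinatorial and analytic. On the combinatorial side, one must verify that the iterated residue calculation produces exactly the intersection $\bigcap_{j=1}^{r-1} S_{j,r}^{(d_j)}$ described in the statement, which requires a careful reindexing at each induction step and is where bookkeeping errors are easiest to make. On the analytic side, the error term $\sum_{j=1}^r O(|\epsilon_j|)$ must be controlled uniformly under the running hypothesis $|\epsilon_k/(\epsilon_j + \cdots + \epsilon_r)| \ll 1$, and ensuring that this uniformity survives each contour shift — together with sufficient decay of the integrand on the shifted contours — is where most of the technical effort is likely to concentrate.
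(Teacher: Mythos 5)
You should first note that the paper itself contains no proof of \cref{theorem:a}: it is imported verbatim from \cite{Onozuka2013} and \cite{MuraharaOnozuka2022}, where the result is established (at general non-positive integer points, not only at the origin) by exactly the kind of recursive Mellin--Barnes/Euler--Maclaurin analysis you invoke. So your plan has to be judged on its own feasibility, and while the starting identity and the $r=2$ base case are the right tools, the sketch as written has two genuine gaps that sit precisely where you defer the work.

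First, the induction does not close as set up. After the contour shift, the depth-$(r-1)$ function appears as $\zeta(s_1,\dots,s_{r-2},s_{r-1}+s_r+z)$ with $z$ either at the residue points $z=n\ge 0$ or on the shifted contour, so its arguments are near $(0,\dots,0,n)$ (or have unbounded imaginary part), not near the origin; your inductive hypothesis, which is an expansion at the origin only, cannot be applied to these terms. Worse, the points $(0,\dots,0,n)$ with small $n$ lie on singular hyperplanes of the depth-$(r-1)$ function (for instance $n=1$ lies on $s_{r-1}=1$, $n=2$ on $s_{r-2}+s_{r-1}=2$), so the inductive statement you actually need is an asymptotic expansion near all such integer points together with growth estimates uniform in $\Im z$ on the contour --- essentially the full strength of the cited theorems, not the origin case. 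Second, the attribution of branches to residues is not correct as described. With your kernel, the factor $\Gamma(s_r+n)/\Gamma(s_r)=s_r(s_r+1)\cdots(s_r+n-1)$ is $O(|\epsilon_r|)$ for $n\ge 1$, so those residues survive into the main term only because the inner zeta simultaneously blows up like $(\epsilon_j+\cdots+\epsilon_r)^{-1}$; it is this collision, not the moving pole on $s_{r-1}+s_r+z=1$, that generates the ratio factors (with the contour placed where convergence forces it, the moving pole starts to the left and never needs to be crossed), and you also omit the pole of $\zeta(-z)$ at $z=-1$, which must be crossed and feeds the constant term. Already at $r=2$ one has $\zeta(\epsilon_1,\epsilon_2)=\frac{\zeta(\epsilon_1+\epsilon_2-1)}{\epsilon_2-1}+\sum_{n=0}^{M-1}\binom{-\epsilon_2}{n}\zeta(-n)\zeta(\epsilon_1+\epsilon_2+n)+O(|\epsilon_1|+|\epsilon_2|)$: the constant $\tfrac13$ is $\tfrac1{12}$ (from $z=-1$) plus $\tfrac14$ (from $z=0$), and the coefficient $\tfrac1{12}$ of $\epsilon_2/(\epsilon_1+\epsilon_2)$ comes from the $z=1$ term $-\epsilon_2\,\zeta(-1)\zeta(\epsilon_1+\epsilon_2+1)$, not from a moving-pole residue. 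So the ``bookkeeping'' you postpone is not the verification of a correctly identified mechanism; both the mechanism and the inductive framework need to be repaired before the combinatorics of the sets $S^{(d_1,\dots,d_{r-1})}$ and the uniform error control can even be addressed.
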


\begin{example}
	For $r=2$ and $r=3$, we have
	\[
		\zeta(\epsilon_1, \epsilon_2) = \frac{1}{3} + \frac{1}{12} \frac{\epsilon_2}{\epsilon_1 + \epsilon_2} + \sum_{j=1}^2 O(|\epsilon_j|)
	\]
	and
	\[
		\zeta(\epsilon_1, \epsilon_2, \epsilon_3) = -\frac{1}{4} - \frac{1}{24} \frac{\epsilon_3}{\epsilon_2+\epsilon_3} -\frac{1}{24} \frac{\epsilon_2 + \epsilon_3}{\epsilon_1+\epsilon_2 + \epsilon_3} -\frac{1}{24} \frac{\epsilon_3}{\epsilon_1+\epsilon_2+\epsilon_3} + \sum_{j=1}^3 O(|\epsilon_j|).
	\]
\end{example}

In this study, we aim to understand better the asymptotic coefficients appearing in this theorem. 
This article is organized as follows. 
In \cref{section2}, we prepare some notations and explain that the coefficients in our asymptotic behavior formula (\cref{theorem:a}) are expressed in terms of primitive elements.
In \cref{section3}, we can observe that the coefficients at the origin contain the Gregory coefficients.
\cref{section4} defines a generalization of the Gregory coefficients and proves that the generalized Gregory coefficients coincide with the primitive coefficients $C^{(1,\ldots,1,0,\ldots,0)}$ in \cref{section2}.
In \cref{section5}, we will show that some of the properties of the classical Gregory coefficients can be generalized to the generalized Gregory coefficients.
%
Finally, in \cref{section6}, we introduce the generalized Gregory polynomials to investigate the asymptotic coefficients of the Hurwitz multiple zeta functions at the origin.

\section{Reduction to the primitive cases}\label{section2}

In advancing the discussion, we will introduce several useful notations. 
\begin{definition}\label{definition-C(S)}
For $r \geq 1$ and a finite set $S \subset \Z_{\geq 0}^r$, we define
\[
	C(S) \coloneqq  (-1)^r \sum_{(n_1, \dots, n_r) \in S} \frac{B_{n_1}}{n_1!} \cdots \frac{B_{n_r}}{n_r!}.
\]
We put $C^{(d_1, \dots, d_{r-1})} \coloneqq  C(S^{(d_1, \dots, d_{r-1})})$ and, specially, $C_{i,r} \coloneqq  C^{(1, \dots, 1, 0, \dots, 0)}$ where the number of $1$ in the superscript is $(i-1)$ for $1 \leq i \leq r$. 
\end{definition}
Under these notations, the asymptotic formula~\eqref{MurOno-asymptotic} can be expressed as
\[
	\zeta(\epsilon_1, \dots, \epsilon_r) = \sum_{d_1, \dots, d_{r-1} \in \{0,1\}} C^{(d_1, \dots, d_{r-1})} \prod_{\substack{1 \leq j \leq r-1 \\ d_j=1}} \frac{\epsilon_{j+1} + \cdots + \epsilon_r}{\epsilon_j + \cdots + \epsilon_r} + \sum_{j=1}^r O(|\epsilon_j|).
\]
In particular, $C_{i,r}$ is the coefficient of $(\epsilon_i + \cdots + \epsilon_r)/(\epsilon_1 + \cdots + \epsilon_r)$. For convenience, we put $S^{()} = \{1\}$, that is,
\[
    C_{1,1} = C^{()} = \zeta(0) = -1/2.
\]
To investigate the coefficients $C^{(d_1, \dots, d_{r-1})}$, we show the following lemmas. For finite sets $S_1, S_2 \subset \Z_{\geq 0}^r$, we say $S_1 \cong S_2$ if there exists a permutation $\sigma \in \mathfrak{S}_r$ such that the map $S_1 \to S_2$ defined by $(n_1, \dots, n_r) \mapsto (n_{\sigma(1)}, \dots, n_{\sigma(r)})$ is bijective.
\begin{lemma}\label{Set-decomp}
	For finite sets $S_1 \subset \Z_{\geq 0}^{r_1}$ and $S_2 \subset \Z_{\geq 0}^{r_2}$, we have the following.
	\begin{itemize}
		\item[(i)] If $r_1 = r_2$ and $S_1 \cong S_2$, then $C(S_1) = C(S_2)$. 
		\item[(ii)] $C(S_1 \times S_2) = C(S_1) C(S_2)$.
	\end{itemize}
\end{lemma}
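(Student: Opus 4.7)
My plan is to observe that both assertions follow almost immediately from the fact that the summand $\prod_{i=1}^r \frac{B_{n_i}}{n_i!}$ in the definition of $C(S)$ is a symmetric function of $(n_1,\dots,n_r)$, together with the standard factorization of a sum over a Cartesian product.

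For part (i), given a permutation $\sigma \in \mathfrak{S}_r$ such that $(n_1,\dots,n_r) \mapsto (n_{\sigma(1)},\dots,n_{\sigma(r)})$ is a bijection $S_1 \to S_2$, I would reindex the sum defining $C(S_2)$: since the product $\prod_{i=1}^r \frac{B_{n_{\sigma(i)}}}{n_{\sigma(i)}!}$ equals $\prod_{i=1}^r \frac{B_{n_i}}{n_i!}$ (commutativity of multiplication), substituting $(n_1,\dots,n_r) \in S_1$ gives back the sum defining $C(S_1)$. The sign $(-1)^r$ is unchanged because $r_1 = r_2$.

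For part (ii), I would simply split the sum. Writing a typical element of $S_1 \times S_2$ as $(n_1,\dots,n_{r_1},m_1,\dots,m_{r_2})$ and using $(-1)^{r_1+r_2} = (-1)^{r_1}(-1)^{r_2}$,
\[
C(S_1 \times S_2) = (-1)^{r_1+r_2} \sum_{\substack{(n_1,\dots,n_{r_1}) \in S_1 \\ (m_1,\dots,m_{r_2}) \in S_2}} \prod_{i=1}^{r_1} \frac{B_{n_i}}{n_i!} \prod_{j=1}^{r_2} \frac{B_{m_j}}{m_j!}
\]
factors as a product of the two independent sums, yielding $C(S_1) C(S_2)$.

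There is no serious obstacle here; the entire content is bookkeeping. The only thing to be careful about is that the permutation $\sigma$ in (i) acts on the coordinate positions rather than on the elements of $S_1$, so the reindexing in (i) must be justified by the symmetry of the summand and not by a change-of-variables that would introduce a Jacobian-type factor. I would mention this explicitly in a one-line remark to avoid any ambiguity.
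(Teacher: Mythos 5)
Your argument is correct and is exactly the routine verification the paper itself dismisses with ``Both claims immediately follow by definition'': reindexing via the permutation using the symmetry of the product $\prod_i B_{n_i}/n_i!$, and factoring the sum over the Cartesian product together with $(-1)^{r_1+r_2}=(-1)^{r_1}(-1)^{r_2}$. Nothing further is needed.
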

\begin{proof}
	Both claims immediately follow by definition.
\end{proof}

\begin{theorem}\label{decomposition-formula}
	For any $0 < l < r-1$, we have
	\[
		C^{(d_1, \dots, d_{l-1}, 0, 1, d_{l+2}, \dots, d_{r-1})} = C^{(d_1, \dots, d_{l-1})} C^{(1, d_{l+2}, \dots, d_{r-1})}.
	\]
\end{theorem}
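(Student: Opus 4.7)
The plan is to exhibit a natural bijection
\[
    S^{(d_1, \dots, d_{l-1}, 0, 1, d_{l+2}, \dots, d_{r-1})} \;\cong\; S^{(d_1, \dots, d_{l-1})} \times S^{(1, d_{l+2}, \dots, d_{r-1})}
\]
via the splitting $(n_1, \dots, n_r) \mapsto ((n_1, \dots, n_l), (n_{l+1}, \dots, n_r))$, and then apply \cref{Set-decomp}(ii) to promote this product decomposition of index sets into the asserted product decomposition of $C$-values.

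First I would extract the geometric consequence of the forced pair $(d_l, d_{l+1}) = (0, 1)$. Under the total-sum constraint $n_1 + \cdots + n_r = r$, the condition $d_l = 0$ is equivalent to $n_1 + \cdots + n_l \geq l$, while $d_{l+1} = 1$ reads $n_1 + \cdots + n_{l+1} \leq l$. Together with $n_{l+1} \geq 0$, these inequalities pin down
\[
    n_1 + \cdots + n_l = l, \qquad n_{l+1} = 0, \qquad n_{l+2} + \cdots + n_r = r - l.
\]
In particular, each of the two blocks satisfies the total-sum condition of its would-be factor set, and the first coordinate of the second block is forced to $0$ exactly as prescribed by the leading "$1$" in $(1, d_{l+2}, \dots, d_{r-1})$.

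Next I would verify that every remaining constraint $S_{j,r}^{(d_j)}$ in the intersection restricts to a condition on only one block. For $j \leq l-1$ the inequality involves only $n_1, \dots, n_l$ (after rewriting the "$d_j = 0$" case via the total sum), and recovers the defining condition of $S^{(d_1, \dots, d_{l-1})}$. For $j = l$ and $j = l+1$ the constraints are automatic from the partition above. For $j \geq l+2$, subtracting the fixed partial sum $n_1 + \cdots + n_{l+1} = l$ turns "$n_1 + \cdots + n_j < j$" into "$n_{l+2} + \cdots + n_j < j - l$", which under the reindexing $m_k = n_{l+k}$ is precisely the condition at position $k = j - l$ in $S^{(1, d_{l+2}, \dots, d_{r-1})}$; the "$d_j = 0$" case is immediate since $n_{j+1} + \cdots + n_r$ lives entirely in the second block.

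Once the bijection is in place, the conclusion is routine: the coordinate $n_{l+1} = 0$ contributes $B_0/0! = 1$ in the product, and the signs combine as $(-1)^l \cdot (-1)^{r-l} = (-1)^r$, so \cref{Set-decomp}(ii) delivers the claimed identity. The only real obstacle is the indexing discipline involved in the $j \geq l+2$ step; everything else is forced by the definitions.
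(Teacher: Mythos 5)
Your proposal is correct and follows essentially the same route as the paper: the forced pair $(d_l,d_{l+1})=(0,1)$ pins down $n_1+\cdots+n_l=l$, $n_{l+1}=0$, $n_{l+2}+\cdots+n_r=r-l$, the remaining constraints decouple block by block, and \cref{Set-decomp} converts the product decomposition of the index set into the product of $C$-values. You merely spell out the block-by-block verification (and the harmless sign/$B_0$ bookkeeping) that the paper leaves implicit.
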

\begin{proof}
	Since
	\[
		S^{(d_1, \dots, d_{l-1}, 0, 1, d_{l+2}, \dots, d_{r-1})} = \bigcap_{j=1}^{l-1} S_{j, r}^{(d_j)} \cap S_{l, r}^{(0)} \cap S_{l+1, r}^{(1)} \cap \bigcap_{j=l+2}^{r-1} S_{j,r}^{(d_j)},
	\]
	any $(n_1, \dots, n_r) \in S^{(d_1, \dots, d_{l-1}, 0, 1, d_{l+2}, \dots, d_{r-1})}$ satisfies that $n_{l+1} + \cdots + n_r \leq r-l$ and $n_1 + \cdots + n_{l+1} < l+1$, that is, $n_1 + \cdots + n_l = l$, $n_{l+1} = 0$, and $n_{l+2} + \cdots + n_r = r-l$. Thus, we see that
	\[
		S^{(d_1, \dots, d_{l-1}, 0, 1, d_{l+2}, \dots, d_{r-1})} \cong S^{(d_1, \dots, d_{l-1})} \times S^{(1, d_{l+2}, \dots, d_{r-1})},
	\]
	which implies the result.
\end{proof}
The decomposition formula reduces the considered coefficients $C^{(d_1, \dots, d_{r-1})}$ to the special cases $C_{i,r}$. We refer to the index $(d_1, \dots, d_{r-1})$ as \emph{primitive} if it satisfies the condition $(d_1, \dots, d_{r-1}) = (1, \dots, 1, 0, \dots, 0)$ according to Sasaki~\cite[Theorem 10]{Sasaki2023}.

\section{The primitive cases}\label{section3}
In the following sections, we focus 
on the coefficients $C_{i,r} = C^{(1,\dots, 1, 0, \dots, 0)}$ $(1 \leq i \leq r)$ for primitive indices, where the numbers of $1$ and $0$ in the superscript are $(i-1)$ and $(r-i)$, respectively. The first few examples are listed below.
\renewcommand{\arraystretch}{1.5}
\begin{table}[H]
\centering
\begin{tabular}{c||c|c|c|c|c|c|c}
	$r \backslash i$ & 1 & 2 & 3 & 4 & 5 & 6 & 7\\ \hline \hline
	1 & $-\frac{1}{2}$ & & & & & & \\ \hline
	2 & $\frac{1}{3}$ & $\frac{1}{12}$ & & & & \\ \hline	
	3 & $-\frac{1}{4}$ & $-\frac{1}{24}$ & $-\frac{1}{24}$ &  &  &  \\ \hline	
	4 & $\frac{1}{5}$ & $\frac{19}{720}$ & $\frac{7}{360}$ & $\frac{19}{720}$ & & \\ \hline	
	5 & $-\frac{1}{6}$ & $-\frac{3}{160}$ & $-\frac{17}{1440}$ & $-\frac{17}{1440}$ & $-\frac{3}{160}$ &  \\ \hline	
	6 & $\frac{1}{7}$ & $\frac{863}{60480}$ & $\frac{41}{5040}$ & $\frac{211}{30240}$ & $\frac{41}{5040}$ & $\frac{863}{60480}$ \\ \hline	
	7 & $-\frac{1}{8}$ & $-\frac{275}{24192}$ & $-\frac{731}{120960}$ & $-\frac{19}{4032}$ & $-\frac{19}{4032}$ & $-\frac{731}{120960}$ & $-\frac{275}{24192}$ \\ 			
\end{tabular}
\caption{Examples of $C_{i,r}$.}
\label{List-Cir}
\end{table}
From this table, we can observe that the column for $i=1$ is given by $C_{1,r} = (-1)^r/(r+1)$, and the column for $i=2$ coincides with the \emph{Gregory coefficients} $C_{2,r} = -G_r$ defined by
\begin{align}\label{Gregory-def}
	\frac{x}{\log(1+x)} = \sum_{n=0}^\infty G_n x^n,
\end{align}
(see also~\cite[A002206]{OEIS}). 
In addition, we observe the symmetric property 
$C_{i+1,r} = C_{r-i+1,r}$ for $1 \leq i \leq r-1$. 
The rest of this article is dedicated to proving these observations and establishing various properties satisfied by $C_{i,r}$ by introducing a generalization of the Gregory coefficients. 
This section shows the following recurrence formula for $C_{i,r}$.
%
\begin{lemma}\label{initial-condition}
	For $r \geq 1$, we have $C_{1,r} = (-1)^r/(r+1)$.
\end{lemma}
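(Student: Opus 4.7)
The plan is to interpret the sum $L_r := (-1)^r C_{1,r}$ as a coefficient of a simple power series via a plane-tree generating function identity. Unpacking the definition, $S^{(0,\ldots,0)} = \bigcap_{j=1}^{r-1} S_{j,r}^{(0)}$ consists of the tuples $(n_1,\ldots,n_r) \in \Z_{\geq 0}^r$ with $\sum_i n_i = r$ and $n_1 + \cdots + n_j \geq j$ for every $j = 1,\ldots,r-1$, which is the Lukasiewicz ballot-type condition on the sequence.

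Next I would introduce the Bernoulli exponential generating function $F(x) := \sum_{n \geq 0} \frac{B_n}{n!} x^n = \frac{xe^x}{e^x-1}$ (so $F(0) = 1$) and the unique formal power series $T(z) \in \Q[[z]]$ with $T(0) = 0$ satisfying $T(z) = zF(T(z))$. This functional equation rearranges to $e^T - 1 = ze^T$, whence $e^{-T} = 1-z$ and $T(z) = -\log(1-z) = \sum_{n \geq 1} z^n/n$, giving $[z^n] T(z) = 1/n$.

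The critical step, and the main obstacle, is to show $L_r = [z^{r+1}] T(z)$. I would invoke the classical Lukasiewicz depth-first encoding of rooted plane trees: the solution of $T = zF(T)$ is precisely the generating series $\sum_{\tau} z^{|\tau|} \prod_{v} \frac{B_{c(v)}}{c(v)!}$ over rooted plane trees $\tau$ weighted by the Bernoulli EGF at each vertex (with $c(v)$ the number of children). For trees with $n = r+1$ vertices, the DFS children-count sequence $(c_1,\ldots,c_{r+1}) \in \Z_{\geq 0}^{r+1}$ satisfies $\sum_i c_i = r$ and $c_1 + \cdots + c_j \geq j$ for $j \leq r$; at $j = r$ this forces $c_{r+1} = 0$, and the initial $r$ entries range precisely over $S^{(0,\ldots,0)}$. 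Since $B_0/0! = 1$ absorbs the trailing zero in the weight, we obtain $L_r = [z^{r+1}] T(z) = 1/(r+1)$, hence $C_{1,r} = (-1)^r/(r+1)$.

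An alternative route that avoids the tree bijection is to establish the functional equation $U(z) = F(zU(z))$ for $U(z) := \sum_{r \geq 0} L_r z^r$ directly from a first-entry decomposition of Lukasiewicz loops, and then observe that $T(z) := zU(z)$ satisfies $T = zF(T)$ with $T(0) = 0$, forcing $T(z) = -\log(1-z)$ by uniqueness. Either way, the analytic heart of the argument is the explicit inversion $e^{-T} = 1-z$.
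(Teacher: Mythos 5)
Your proof is correct, but it takes a genuinely different route from the paper. The paper disposes of \cref{initial-condition} in two lines: it quotes Akiyama--Tanigawa's evaluation of the iterated limit $\lim_{\epsilon_1\to 0}\cdots\lim_{\epsilon_r\to 0}\zeta(\epsilon_1,\dots,\epsilon_r)=(-1)^r/(r+1)$ and notes that, in this order of limits, every ratio $(\epsilon_{j+1}+\cdots+\epsilon_r)/(\epsilon_j+\cdots+\epsilon_r)$ attached to an index with some $d_j=1$ tends to $0$, so by \cref{theorem:a} the limit isolates exactly $C^{(0,\dots,0)}=C_{1,r}$. You instead prove the underlying Bernoulli-number identity $\sum_{(n_1,\dots,n_r)\in S^{(0,\dots,0)}}\frac{B_{n_1}}{n_1!}\cdots\frac{B_{n_r}}{n_r!}=\frac{1}{r+1}$ directly from \cref{definition-C(S)}: your rewriting of $S^{(0,\dots,0)}$ as Lukasiewicz-type words, the weighted plane-tree interpretation of the solution of $T=zF(T)$ with $F(x)=xe^x/(e^x-1)$, and the explicit inversion $e^{-T}=1-z$, $T=-\log(1-z)$, are all correct (the appended $c_{r+1}=0$ is absorbed by $B_0/0!=1$, and small cases $r=1,2$ check out), though you do invoke two standard facts (the Lukasiewicz encoding of plane trees and the simply-generated-tree solution of $T=zF(T)$) without proof, which should be referenced. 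What your route buys is a self-contained combinatorial proof requiring no analytic input; moreover it is essentially the shadow of the paper's later Hurwitz machinery, since with $F_a(x)=xe^{ax}/(e^x-1)$ the equation $T=zF_a(T)$ rearranges to $e^{-aT}(e^T-1)=z$, i.e.\ $T=L(a,z)$, so your argument run with Bernoulli polynomials would give an alternative proof of \cref{GGP-initial-condition} bypassing the derivative recurrence of \cref{C(a)-der-rec}. What the paper's route buys is brevity and the interpretation of $C_{1,r}$ as an honest iterated limit of the multiple zeta function, at the cost of resting on the cited result of Akiyama--Tanigawa rather than on the combinatorics of the set $S^{(0,\dots,0)}$ itself.
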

\begin{proof}
	By Akiyama and Tanigawa~\cite[Corollary 1]{AkiyamaTanigawa2001}, we have
    \[
        \lim_{\epsilon_1 \to 0} \lim_{\epsilon_2 \to 0} \cdots \lim_{\epsilon_r \to 0} \zeta(\epsilon_1, \dots, \epsilon_r) = \frac{(-1)^r}{r+1},
    \]
    which coincides with $C_{1,r}$.
\end{proof}

\begin{theorem}\label{theorem:Cir-recurrence-rel}
	For $1 \leq i < r$, we have the recurrence relation
	\[
		C_{i+1,r} = C_{i,r} + \sum_{k=r-i+1}^{r-1} C_{r-i+1,k} \cdot C_{r-k,r-k} - \sum_{k=i}^{r-1} C_{i,k} \cdot C_{r-k, r-k}
	\]
	with initial condition $C_{1,r} = (-1)^r/(r+1)$.
\end{theorem}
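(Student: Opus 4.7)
The plan is to establish the recurrence by decomposing the defining set $S^{(1,\dots,1,0,\dots,0)}$ of $C_{i,r}$ according to the ``last diagonal touch'' of its partial sum sequence and then exploiting a reversal symmetry, with \cref{Set-decomp} (multiplicativity of $C$) as the principal technical tool. The key auxiliary object is the ``doubly-strict'' subset $\tilde S_{i,r} \subset \Z_{\geq 0}^r$ of tuples $(n_1,\dots,n_r)$ whose partial sums $\sigma_j := n_1 + \cdots + n_j$ satisfy $\sigma_r = r$, $\sigma_j \leq j-1$ for $1 \leq j < i$, and $\sigma_j \geq j+1$ for $i \leq j \leq r-1$; both inequalities are strict here, in contrast to the one-sided strictness in $S^{(1,\dots,1,0,\dots,0)}$.

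The first step partitions $S^{(1,\dots,1,0,\dots,0)}$ (with $i-1$ ones) by the largest index $k < r$ for which $\sigma_k = k$. Since the condition $\sigma_j \leq j-1$ for $j < i$ rules out $\sigma_j = j$ on $1 \leq j \leq i-1$, either no such $k$ exists in $(0,r)$ (placing the tuple in $\tilde S_{i,r}$) or $k \in \{i,\dots,r-1\}$, in which case the maximality of $k$ upgrades $\sigma_j \geq j$ to $\sigma_j \geq j+1$ for $k < j \leq r-1$. The initial segment $(n_1,\dots,n_k)$ then lies in the set defining $C_{i,k}$ and the translated tail $(n_{k+1},\dots,n_r)$ in $\tilde S_{1,r-k}$, so \cref{Set-decomp}(ii) yields
\[
    C_{i,r} \;=\; C(\tilde S_{i,r}) \;+\; \sum_{k=i}^{r-1} C_{i,k}\,C(\tilde S_{1,r-k}).
\]

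The second step applies the reversal $(n_1,\dots,n_r) \mapsto (n_r,\dots,n_1)$, which by \cref{Set-decomp}(i) gives $C(\tilde S_{i,r}) = C(\tilde S_{r-i+1,r})$; strictness of both defining inequalities is essential here, since reversal sends $\sigma_j \leq j-1$ to $\tau_{r-j} \geq r-j+1$ and vice versa. Specializing to $i = 1$ gives $\tilde S_{1,m} \cong \tilde S_{m,m}$, which is precisely the set $S^{(1,\dots,1)} \subset \Z_{\geq 0}^m$ defining $C_{m,m}$, so $C(\tilde S_{1,m}) = C_{m,m}$ for every $m \geq 1$. Writing the display from the first step both for the given index $i$ and for $r-i+1$, and eliminating the shared value $C(\tilde S_{i,r}) = C(\tilde S_{r-i+1,r})$, produces
\[
    C_{r-i+1,r} \;=\; C_{i,r} \;+\; \sum_{k=r-i+1}^{r-1} C_{r-i+1,k}\,C_{r-k,r-k} \;-\; \sum_{k=i}^{r-1} C_{i,k}\,C_{r-k,r-k},
\]
which becomes the statement of the theorem upon invoking the palindromic identity $C_{i+1,r} = C_{r-i+1,r}$ observed from \cref{List-Cir} and established in later sections through the generalized Gregory coefficients.

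The most delicate point is the first step: one must verify that the maximality of $k$ forces both tail inequalities to be strict, placing the tail in $\tilde S_{1,r-k}$ (rather than in the weaker set defined only by $\sigma_j \geq j$) so that \cref{Set-decomp}(ii) applies cleanly with $C(\tilde S_{1,r-k}) = C_{r-k,r-k}$. The reversal in the second step is a clean involution, but its usability hinges on $\tilde S_{i,r}$ being doubly-strict; applying the analogous reversal directly to $S^{(1,\dots,1,0,\dots,0)}$ would fail because its one-sided inequalities are not symmetric under $\sigma_j \mapsto r - \sigma_{r-j}$.
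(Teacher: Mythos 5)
Your argument is, up to notation, the same as the paper's: your doubly-strict set $\widetilde{S}_{i,r}$ is exactly the paper's $S_{i,i,r}$, your partition of $S^{(1,\dots,1,0,\dots,0)}$ by the largest $k<r$ with $\sigma_k=k$ is the content of \cref{Cir-C+C0}, your product step $C(\widetilde{S}_{1,r-k})=C_{r-k,r-k}$ combined with \cref{Set-decomp}(ii) is \cref{C0-product}, and your full-reversal identity $C(\widetilde{S}_{i,r})=C(\widetilde{S}_{r-i+1,r})$ is the paper's observation $S_{i,i,r}\cong S_{r-i+1,r-i+1,r}$. All of these steps, including the verification that maximality of $k$ upgrades the tail inequalities to strict ones, are correct as you present them, and the elimination of the shared term reproduces the paper's computation.

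The gap is the very last step. You pass from $C_{r-i+1,r}=C_{i,r}+\cdots$ to the stated recurrence by invoking the palindromic identity $C_{i+1,r}=C_{r-i+1,r}$, but you justify it only as ``observed from \cref{List-Cir}'' and as ``established in later sections through the generalized Gregory coefficients.'' The first is not a proof, and the second is circular inside the paper's logic: the identification $C_{i,r}=G_{i,r-i+2}$ (\cref{Coefficient=Generalized-Gregory}) is itself deduced from the recurrence you are proving, so neither it nor its consequence $C_{i+1,r}=G_{i+1,r-i+1}=G_{r-i+1,i+1}=C_{r-i+1,r}$ is available here. The identity does have a short direct proof, which is precisely the paper's \cref{Reversing-n2-nr}: for $i\geq 1$ the condition $n_1<1$ forces $n_1=0$ on every tuple in the set defining $C_{i+1,r}$, and then reversing $(n_2,\dots,n_r)$ (with the forced $n_1=0$ left in place) exchanges the conditions $n_2+\cdots+n_j\leq j-1$ and $n_{j+1}+\cdots+n_r\leq r-j$, giving a bijection onto the set defining $C_{r-i+1,r}$. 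Note this is a different reversal from the one you dismiss at the end: the full reversal of $(n_1,\dots,n_r)$ indeed fails on the one-sided set, but the partial reversal after pinning $n_1=0$ succeeds. With that lemma supplied (and the initial value $C_{1,r}=(-1)^r/(r+1)$ imported from \cref{initial-condition}, as the paper also does), your proof is complete and coincides with the paper's.
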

To prove this theorem, we introduce a generalization of the set $S^{(1,\dots, 1, 0, \dots, 0)}$ and present some useful lemmas.
\begin{definition}
	For integers $i, k, r$ satisfying $1 \leq i \leq k \leq r$, we define the sets
	\[
		S_{i,k,r} \coloneqq  \left\{ (n_1, \dots, n_r) \in \Z_{\geq 0}^r : 
		\begin{aligned}  
			&n_1 + \cdots + n_r = r,\\
			&n_1 + \cdots + n_j < j\quad (1 \leq j < i),\\
			&n_{j+1} + \cdots + n_r \leq r-j\quad (i \leq j < k),\\
			&n_{k+1} + \cdots + n_r \leq r-k-1 \cdots (*),\\
			&n_{j+1} + \cdots + n_r \leq r-j-1\quad (k < j < r)
		\end{aligned} \right\}
	\]
	and $S_{i,k,r}^0$ by replacing the condition $(*)$ with $n_{k+1} + \cdots + n_r = r-k$. In addition, we define $C_{i,k,r} = C(S_{i,k,r})$ and $C_{i,k,r}^0 = C(S_{i,k,r}^0)$.
\end{definition}

By comparing the definitions, we see that $S_{i,r,r} = S^{(1,\dots,1, 0,\dots,0)}$ with $(i-1)$ occurrences of $1$, which implies $C_{i,r} = C_{i,r,r}$.

\begin{lemma}\label{Reversing-n2-nr}
	For $1 \leq i \leq r-1$, the map $S_{i+1,r,r} \to S_{r-i+1, r,r}$ defined by $(n_1, n_2, n_3, \dots, n_r) \mapsto (n_1, n_r, \dots, n_3, n_2)$ is bijective, which implies the symmetric property
	\[
		C_{i+1,r} = C_{r-i+1,r}.
	\]
\end{lemma}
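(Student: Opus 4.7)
The plan is to verify that the map $\sigma : (n_1, n_2, \ldots, n_r) \mapsto (n_1, n_r, \ldots, n_3, n_2)$ is an involution of $\Z_{\geq 0}^r$ which carries $S_{i+1, r, r}$ into $S_{r-i+1, r, r}$; bijectivity will then follow for free, since the reverse inclusion is obtained by applying the same argument with the indices $i+1$ and $r-i+1$ interchanged. The symmetry $C_{i+1, r} = C_{r-i+1, r}$ will then follow from \cref{Set-decomp}(i) combined with the identification $C_{i, r} = C_{i, r, r}$.

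First I would unpack the definitions: when $k = r$, the condition $(\ast)$ and the inequalities for $k < j < r$ in the definition of $S_{i, k, r}$ are vacuous, so the nontrivial constraints on $(n_1, \ldots, n_r) \in S_{i+1, r, r}$ reduce to $n_1 + \cdots + n_r = r$, the \emph{lower} inequalities $n_1 + \cdots + n_j < j$ for $1 \leq j \leq i$, and the \emph{upper} inequalities $n_{j+1} + \cdots + n_r \leq r - j$ for $i + 1 \leq j \leq r - 1$. Writing $(m_1, \ldots, m_r) = \sigma(n_1, \ldots, n_r)$, the sum condition is preserved, and the $j = 1$ instance of the lower inequality on $m$ reads $n_1 < 1$; this already holds (by the $j = 1$ case of the lower inequality on $n$, available since $i \geq 1$) and forces $n_1 = 0$.

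Next I would exploit the substitution $j' = r - j + 1$, which exchanges the two families of inequalities. For $2 \leq j \leq r - i$, using $n_1=0$, one gets
\[
m_1 + \cdots + m_j = n_1 + n_r + n_{r-1} + \cdots + n_{r-j+2} = n_{j'+1} + \cdots + n_r
\]
with $j' \in [i+1, r-1]$, which is at most $r - j' = j - 1 < j$ by the corresponding upper inequality of $S_{i+1, r, r}$. Symmetrically, for $r - i + 1 \leq j \leq r - 1$, again using $n_1 = 0$,
\[
m_{j+1} + \cdots + m_r = n_2 + n_3 + \cdots + n_{r-j+1} = n_1 + \cdots + n_{j'}
\]
with $j' \in [2, i]$, which is strictly less than $j' = r - j + 1$, hence at most $r - j$, by the corresponding lower inequality of $S_{i+1, r, r}$.

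I expect the only real obstacle to be careful bookkeeping of the index shifts, in particular matching the $j$-ranges on the two sides of $\sigma$ and isolating the boundary case $j = 1$ that supplies $n_1 = 0$ and thereby makes the substitution $j' = r - j + 1$ work cleanly on the remaining ranges.
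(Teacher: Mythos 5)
Your proposal is correct and follows essentially the same route as the paper: both arguments hinge on the $j=1$ lower inequality forcing $n_1=0$, after which reversing the coordinates $n_2,\dots,n_r$ exchanges the prefix conditions ($j\le i$) with the suffix conditions ($j\ge i+1$), the paper making this visible by rewriting $S_{i+1,r,r}$ in a symmetric form while you verify it via the substitution $j'=r-j+1$. Your explicit index bookkeeping and the involution/$i\leftrightarrow r-i$ argument for bijectivity are a fine, slightly more detailed rendering of the same proof.
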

This lemma was shown by Sasaki~\cite[Theorem 8]{Sasaki2023} in more general cases. Here, we provide a direct proof.
\begin{proof}
	For $i+1 \geq 2$, we have
	\[
		S_{i+1,r,r} = \left\{ (0, n_2, \dots, n_r) \in \Z_{\geq 0}^r : 
		\begin{aligned}  
			&n_2 + \cdots + n_r = r,\\
			&n_2 + \cdots + n_j \leq j-1\quad (1 < j \leq i),\\
			&n_{j+1} + \cdots + n_r \leq r-j\quad (i+1 \leq j < r)
		\end{aligned} \right\}.
	\]
	With this expression, the map reversing the order of $n_2, \dots, n_r$ is a bijection from $S_{i+1,r,r}$ to $S_{r-i+1,r,r}$.
\end{proof}

\begin{lemma}\label{Cir-C+C0}
	For $1 \leq i < k \leq r$, we have $S_{i,k,r} = S_{i,k-1,r} \sqcup S_{i,k-1,r}^0$. Thus, for $1 \leq i \leq r$,
	\[
		C_{i,r} = C_{i,i,r} + \sum_{k=i}^{r-1} C_{i,k,r}^0.
	\]
\end{lemma}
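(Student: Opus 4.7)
The plan is to prove the set-theoretic identity first, and then derive the numerical identity by iterating it and using additivity of $C(\cdot)$ over disjoint unions.

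\textbf{Step 1: Set decomposition.} I would begin by writing out the defining inequalities of $S_{i,k,r}$ and of $S_{i,k-1,r} \sqcup S_{i,k-1,r}^0$ side by side. The conditions on $n_1+\cdots+n_j$ for $1\le j<i$ and on $n_{j+1}+\cdots+n_r$ for $k<j<r$ coincide trivially in all three sets, so the comparison reduces to the two tuples $n_k+\cdots+n_r$ and $n_{k+1}+\cdots+n_r$. In $S_{i,k,r}$, the index $j=k-1$ lies in the range $[i,k)$, so it contributes $n_k+\cdots+n_r\le r-k+1$; the condition $(*)$ at position $k$ gives $n_{k+1}+\cdots+n_r\le r-k-1$. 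On the other side, $S_{i,k-1,r}$ has $n_k+\cdots+n_r\le r-k$ (its $(*)$-condition) and $n_{k+1}+\cdots+n_r\le r-k-1$ (from $k$ being in the range $k-1<j<r$), while $S_{i,k-1,r}^{0}$ replaces the first of these by $n_k+\cdots+n_r=r-k+1$. Their union therefore imposes $n_k+\cdots+n_r\le r-k+1$ and $n_{k+1}+\cdots+n_r\le r-k-1$, matching $S_{i,k,r}$ exactly. Disjointness is immediate since $S_{i,k-1,r}$ requires $n_k+\cdots+n_r\le r-k$ whereas $S_{i,k-1,r}^0$ requires $n_k+\cdots+n_r=r-k+1$.

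\textbf{Step 2: Iteration.} Since $S_{i,r,r}=S^{(1,\dots,1,0,\dots,0)}$ (so $C_{i,r}=C_{i,r,r}$), I would apply Step 1 with $k=r,r-1,\dots,i+1$ to get the telescoping chain
\[
S_{i,r,r} = S_{i,r-1,r}\sqcup S_{i,r-1,r}^{0}
= S_{i,r-2,r}\sqcup S_{i,r-2,r}^{0}\sqcup S_{i,r-1,r}^{0}
=\cdots
= S_{i,i,r}\sqcup\bigsqcup_{k=i}^{r-1} S_{i,k,r}^{0}.
\]

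\textbf{Step 3: Passage to $C$-values.} From \cref{definition-C(S)}, $C$ is additive over disjoint unions of finite subsets of $\mathbb{Z}_{\ge 0}^r$. Applying this to the decomposition above yields
\[
C_{i,r}=C_{i,r,r}=C_{i,i,r}+\sum_{k=i}^{r-1} C_{i,k,r}^{0},
\]
as required.

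The main obstacle is purely bookkeeping in Step 1: the defining inequalities of $S_{i,k,r}$ use three different regimes (strict $<$ for small $j$, $\le$ in the middle, a special $(*)$-condition, then strict $<$ again), and one must keep the boundary values $j=k-1$ and $j=k$ straight when shifting $k\mapsto k-1$. Once the equality of inequality systems is correctly verified, the rest of the argument is formal.
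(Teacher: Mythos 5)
Your proof is correct and follows essentially the same route as the paper: the paper likewise splits the condition $n_k+\cdots+n_r\le r-k+1$ at $j=k-1$ in $S_{i,k,r}$ into the two cases $\le r-k$ and $=r-k+1$ (your union viewed in reverse), then iterates the disjoint decomposition and uses additivity of $C(\cdot)$. Your version just spells out the bookkeeping of the inequality regimes more explicitly than the paper does.
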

\begin{proof}
	By dividing the inequality condition $n_k + \cdots + n_r \leq r-(k-1)$ for $j=k-1$ in the definition of $S_{i,k,r}$ into two conditions
	\begin{itemize}
		\item $n_k + \cdots + n_r = r-(k-1)$,
		\item $n_k + \cdots + n_r \leq r-k$,
	\end{itemize}
	the first claim immediately follows. By applying it repeatedly, we obtain that
	\[
		C_{i,r,r} = C_{i,r-1,r} + C_{i,r-1,r}^0 = \cdots = C_{i,i,r} + \sum_{k=i}^{r-1} C_{i,k,r}^0,
	\]
	which concludes the proof.
\end{proof}

\begin{lemma}\label{C0-product}
	For $1 \leq i \leq k \leq r$, we have
	\[
		C_{i,k,r}^0 = C_{i,k} \cdot C_{r-k,r-k}.
	\]
\end{lemma}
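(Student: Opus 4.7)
The plan is to show that the set $S_{i,k,r}^0$ decomposes, up to a permutation of coordinates, as a Cartesian product of a set on the first $k$ coordinates isomorphic to $S_{i,k,k}$ with a set on the last $r-k$ coordinates isomorphic to $S_{r-k,r-k,r-k}$. Combined with the identifications $S_{i,k,k} = S^{(1,\dots,1,0,\dots,0)}$ (of length $k$, with $i-1$ ones) and $S_{r-k,r-k,r-k} = S^{(1,\dots,1)}$ (of length $r-k$) noted right after the definition, the conclusion will follow from both parts of \cref{Set-decomp}.

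The key observation is that the condition $n_{k+1} + \cdots + n_r = r-k$ in $S_{i,k,r}^0$, together with $n_1 + \cdots + n_r = r$, forces $n_1 + \cdots + n_k = k$. Hence the constraints defining $S_{i,k,r}^0$ decouple into a head block and a tail block. The constraints involving only the head $(n_1, \dots, n_k)$ are $n_1 + \cdots + n_j < j$ for $1 \leq j < i$, together with (after substituting $n_{j+1} + \cdots + n_r = (n_{j+1} + \cdots + n_k) + (r-k)$) the inequalities $n_{j+1} + \cdots + n_k \leq k-j$ for $i \leq j < k$. These match the defining conditions of $S_{i,k,k}$, yielding the factor $C_{i,k}$.

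For the tail, set $m_p \coloneqq n_{k+p}$ for $p = 1, \dots, r-k$. The remaining constraints read $m_1 + \cdots + m_{r-k} = r-k$ and $m_{l+1} + \cdots + m_{r-k} \leq (r-k)-l-1$ for $1 \leq l < r-k$. Using the sum condition, the inequalities are equivalent to $m_1 + \cdots + m_l \geq l+1$. Reversing the order of the $m_p$'s converts these into the all-ones primitive conditions $\tilde m_1 + \cdots + \tilde m_l < l$ for $1 \leq l < r-k$, which define $S_{r-k,r-k,r-k}$. By \cref{Set-decomp}(i) the reversal preserves the $C$-value, so the tail contributes $C_{r-k,r-k}$, and \cref{Set-decomp}(ii) then combines the two factors into the claimed identity. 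The main bookkeeping step is matching the reversed inequalities on the tail to the all-ones primitive conditions, but this is a direct substitution once the decoupling is set up; the degenerate cases $k=i$ or $k=r$ are handled by the natural empty-product conventions.
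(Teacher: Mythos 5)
Your proposal is correct and follows essentially the same route as the paper: both split $S_{i,k,r}^0$ into the head block (identified with $S_{i,k,k}$, using that the condition $n_{k+1}+\cdots+n_r=r-k$ forces $n_1+\cdots+n_k=k$) and the tail block, which becomes $S_{r-k,r-k,r-k}$ after reversing the order of its coordinates, and then apply both parts of \cref{Set-decomp}. Your handling of the tail inequalities via the reversal matches the paper's "reverse operation," so there is nothing to add.
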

\begin{proof}
	By definition, we have
	\begin{align*}
		S_{i,k,r}^0 &= \left\{ (n_1, \dots, n_k) \in \Z_{\geq 0}^k : 
		\begin{aligned}  
			&n_1 + \cdots + n_k = k,\\
			&n_1 + \cdots + n_j < j \quad (1 \leq j < i),\\
			&n_{j+1} + \cdots + n_k \leq k-j \quad (i \leq j < k),
		\end{aligned} \right\} \\
		&\qquad \times 		
		\left\{ (n_{k+1}, \dots, n_r) \in \Z_{\geq 0}^{r-k} : 
		\begin{aligned} 	
			&n_{k+1} + \cdots + n_r = r-k,\\
			&n_{j+1} + \cdots + n_r < r-j \quad (k < j \leq r-1)
		\end{aligned} \right\}\\
		&\cong S_{i,k,k} \times S_{r-k, r-k, r-k},
	\end{align*}
	where the reverse operation gives the last bijection. Therefore, $C_{i,k,r}^0 = C_{i,k,k} \cdot C_{r-k,r-k,r-k} = C_{i,k} \cdot C_{r-k,r-k}$.
\end{proof}

\begin{proof}[Proof of \cref{theorem:Cir-recurrence-rel}]
	First, we note that
	\[
		S_{i,i,r} \coloneqq  \left\{ (n_1, \dots, n_r) \in \Z_{\geq 0}^r : 
		\begin{aligned}  
			&n_1 + \cdots + n_r = r,\\
			&n_1 + \cdots + n_j \leq j-1 \quad (1 \leq j < i),\\
			&n_{j+1} + \cdots + n_r \leq r-j-1 \quad (i \leq j < r)
		\end{aligned} \right\}
	\]
	satisfies the symmetric property $S_{i,i,r} \cong S_{r-i+1, r-i+1,r}$, that is, $C_{i,i,r} = C_{r-i+1, r-i+1, r}$. By it with \cref{Reversing-n2-nr} and \cref{Cir-C+C0}, we have
	\begin{align*}
		C_{i+1,r} &= C_{r-i+1,r} = C_{r-i+1, r-i+1, r} + \sum_{k=r-i+1}^{r-1} C_{r-i+1,k,r}^0\\
			&= C_{i,i,r}+ \sum_{k=r-i+1}^{r-1} C_{r-i+1,k,r}^0.
	\end{align*}
	By comparing it with \cref{Cir-C+C0} and eliminating $C_{i,i,r}$, we obtain 
	\[
		C_{i,r} - \sum_{k=i}^{r-1} C_{i,k,r}^0 = C_{i+1,r} - \sum_{k=r-i+1}^{r-1} C_{r-i+1,k,r}^0.
	\]
	Finally, \cref{C0-product} implies the desired result.
\end{proof}

\section{Generalized Gregory coefficients}\label{section4}
This section aims to express the recurrence relation provided in \cref{theorem:Cir-recurrence-rel} in generating series.
We use $\log^m x = (\log x)^m$ to simplify the notation.
\begin{definition}\label{definition:Generalized-Gregory}
    We define the \emph{Generalized Gregory coefficients} $G_{m,n}$ by the generating series	
    \[
		\mathcal{G}(x,y) \coloneqq  \sum_{m, n \geq 0} G_{m,n} x^m y^n \coloneqq  \frac{y \log^2(1+x) - x \log^2(1+y)}{\log(1+x) - \log(1+y)}.
	\]
\end{definition}
The first few examples are listed below.
\renewcommand{\arraystretch}{1.5}
\begin{table}[H]
\centering
\begin{tabular}{c||c|c|c|c|c|c|c}
	$m \backslash n$ & 0 & 1 & 2 & 3 & 4 & 5 & 6\\ \hline \hline
	0 & 0 & 0 & 0 & $0$ & 0 & 0 & 0\\ \hline	
	1 & 0 & 1 & $-\frac{1}{2}$ & $\frac{1}{3}$ & $-\frac{1}{4}$ & $\frac{1}{5}$ & $-\frac{1}{6}$ \\ \hline	
	2 & 0 & $-\frac{1}{2}$ & $\frac{1}{12}$ & $-\frac{1}{24}$ & $\frac{19}{720}$ & $-\frac{3}{160}$ & $\frac{863}{60480}$ \\ \hline	
	3 & 0 & $\frac{1}{3}$ & $-\frac{1}{24}$ & $\frac{7}{360}$ & $-\frac{17}{1440}$ & $\frac{41}{5040}$ & $-\frac{731}{120960}$ \\ \hline	
	4 & 0 & $-\frac{1}{4}$ & $\frac{19}{720}$ & $-\frac{17}{1440}$ & $\frac{221}{30240}$ & $-\frac{19}{4032}$ & $\frac{12499}{3628800}$ \\ \hline	
	5 & 0 & $\frac{1}{5}$ & $-\frac{3}{160}$ & $\frac{41}{5040}$ & $-\frac{19}{4032}$ & $\frac{2843}{907200}$ & $-\frac{469}{207360}$ \\ \hline
	6 & 0 & $-\frac{1}{6}$ & $\frac{863}{60480}$ & $-\frac{731}{120960}$ & $\frac{12499}{3628800}$ & $-\frac{469}{207360}$ & $\frac{386831}{239500800}$
\end{tabular}
\caption{Examples of $G_{m,n}$.}
\end{table}
We immediately see the symmetric property $G_{m,n} = G_{n,m}$ by definition. Furthermore, upon comparing the list with \cref{List-Cir}, we observe that $C_{i,r} = G_{i, r-i+2}$. First, we show the coincidence.
\begin{theorem}\label{Coefficient=Generalized-Gregory}
	For $1 \leq i \leq r$, we have $C_{i,r} = G_{i,r-i+2}$.
\end{theorem}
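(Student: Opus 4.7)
The plan is to prove the identity $C_{i,r} = G_{i, r-i+2}$ by strong induction on $r$. For fixed $r$, \cref{theorem:Cir-recurrence-rel} determines $C_{i,r}$ (for $i \geq 2$) from $C_{1,r}$ and from $C_{j,k}$ with $k < r$; assuming the identity for all such $(j,k)$, it thus suffices to verify that $i \mapsto G_{i,\,r-i+2}$ satisfies the same initial value and recurrence.

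The initial value $G_{1,\, r+1} = (-1)^r/(r+1) = C_{1,r}$ follows from a first-order expansion of $\mathcal{G}(x,y)$ in $x$, which yields $[x^1]\,\mathcal{G}(x,y) = \log(1+y)$, so $G_{1,n} = (-1)^{n+1}/n$ for $n \geq 1$; combined with \cref{initial-condition}, this handles the base case. For the recurrence, setting $m = i$, $n = r-i+2$ (hence $r-i+1 = n-1$ and $r-k = m+n-2-k$), substituting the inductive hypothesis into \cref{theorem:Cir-recurrence-rel}, and reindexing the two sums by $a = k-n+3$ and $a' = k-m+2$ respectively, the recurrence translates into the formal-series identity
\begin{align*}
    G_{m+1,\, n-1} - G_{m,\, n} = \sum_{\substack{a+b = m+1 \\ a \geq 2,\ b \geq 1}} G_{n-1,\, a}\, G_{b,\, 2} \;-\; \sum_{\substack{a+b = n \\ a \geq 2,\ b \geq 1}} G_{m,\, a}\, G_{b,\, 2}, \tag{$\ast$}
\end{align*}
to be established for all $m \geq 1$, $n \geq 3$.

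I would prove $(\ast)$ by multiplying by $x^m y^n$, summing over this range, and checking the resulting identity of formal power series. Writing $u = \log(1+x)$, $v = \log(1+y)$, and $g(x) := \sum_{b \geq 1} G_{b,2}\, x^b = [y^2]\,\mathcal{G}(x,y) = 1 - x/u$ (so that $u\, g(x) = u - x$, and symmetrically $v\, g(y) = v - y$), the generating function of the left-hand side of $(\ast)$ equals $\frac{y}{x}\mathcal{G}(x,y) - \mathcal{G}(x,y)$ up to boundary corrections, while the generating functions of the two sums on the right reduce, by an analogous boundary analysis, to $\frac{y\, g(x)}{x}\bigl(\mathcal{G}(x,y) - yu - xv + xy\bigr)$ and $g(y)\bigl(\mathcal{G}(x,y) - yu\bigr)$ respectively. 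Subtracting and simplifying via $u\, g(x) = u - x$ and $v\, g(y) = v - y$, all boundary contributions cancel, and the net difference collapses to a scalar multiple of $\mathcal{G}(x,y)\,(v-u) + (yu^2 - xv^2)$, which vanishes by the defining relation of $\mathcal{G}$.

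The main obstacle will be the boundary bookkeeping: because $(\ast)$ holds only on $m \geq 1$, $n \geq 3$, the shifted series $\sum_{m,n} G_{m+1,\, n-1}\, x^m y^n = \frac{y}{x}\mathcal{G}(x,y)$ must be corrected for the stray values $G_{1, n}$, $G_{m, 1}$, and the diagonal entry $G_{1, 1} = 1$, and analogous corrections must be applied to the two sums on the right. Once these finitely many exceptional terms are expressed through $u$, $v$, $g(x)$, and $g(y)$, the remaining verification is purely algebraic and reduces to the defining identity of $\mathcal{G}$.
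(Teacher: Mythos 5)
Your proposal is correct and follows essentially the same route as the paper: both reduce the claim, via \cref{theorem:Cir-recurrence-rel} and \cref{initial-condition} (plus $G_{1,n}=(-1)^{n-1}/n$ from \cref{Generalized-Gregory-Gregory}), to showing that $G_{i,r-i+2}$ satisfies the same initial value and recurrence, and both verify that recurrence by a generating-function computation that collapses to the defining relation of $\mathcal{G}(x,y)$. The only cosmetic difference is that the paper packages your identity $(\ast)$ as the symmetry $\widetilde{G}_{m,n}=\widetilde{G}_{n-1,m+1}$ of the auxiliary series $\widetilde{\mathcal{G}}(x,y)$ in \cref{G-til-symmetry}, whereas you verify the same power-series identity directly with explicit boundary corrections.
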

To prove this theorem, we prepare some lemmas.
\begin{lemma}\label{Generalized-Gregory-Gregory}
	We have
	\[
		\sum_{m=1}^\infty G_{m,1} x^m = \log(1+x) = \sum_{m=1}^\infty \frac{(-1)^{m-1}}{m} x^m
	\]
	and
	\[
		1 - \sum_{m=1}^\infty G_{m,2} x^m = \frac{x}{\log(1+x)},
	\]
	which imply that $G_{m,1} = (-1)^{m-1}/m$ and $G_{m,2} = -G_m$, where $G_m$ is the $m$-th Gregory coefficient defined in \eqref{Gregory-def}.
\end{lemma}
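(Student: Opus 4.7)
The plan is to extract the coefficients of $y^1$ and $y^2$ directly from the closed-form generating series. Set $a = \log(1+x)$ and $b = \log(1+y)$ so that $\mathcal{G}(x,y) = (ya^2 - xb^2)/(a-b)$. The key algebraic step is the factorization
\[
    ya^2 - xb^2 = y(a^2 - b^2) + (y-x)b^2 = y(a-b)(a+b) + (y-x)b^2,
\]
which produces the clean decomposition
\[
    \mathcal{G}(x,y) = y(a+b) + \frac{(y-x)\,b^2}{a-b}.
\]
Since $b = y - y^2/2 + O(y^3)$ and $a - b = a + O(y)$, the second summand has a numerator of order $y^2$, and in particular both summands vanish at $y=0$, confirming $G_{m,0}=0$.

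For the first identity I read off the coefficient of $y^1$ in $\mathcal{G}(x,y)$. From $y(a+b)$ this is $a$ (setting $b=0$). From the remainder the numerator is $O(y^2)$ while the denominator is $a + O(y)$, so it contributes $0$. Hence $\sum_{m \geq 0} G_{m,1} x^m = a = \log(1+x)$, and the Mercator series gives $G_{m,1} = (-1)^{m-1}/m$.

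For the second identity I read off the coefficient of $y^2$. From $y(a+b)$ it equals the coefficient of $y^1$ in $a+b$, namely $1$ (contributed by $b$). From the remainder, using $b^2 = y^2 + O(y^3)$ and $1/(a-b) = 1/a + O(y)$, one has $(y-x)b^2/(a-b) = -xy^2/a + O(y^3)$, contributing $-x/a$. Combining the two contributions yields $\sum_{m \geq 0} G_{m,2} x^m = 1 - x/\log(1+x)$, which rearranges to the stated formula once we note $G_{0,2}=0$. The calculation is short and essentially routine; the only real care lies in choosing the decomposition above, which keeps the Taylor expansion in $y$ elementary to second order and bypasses any need to invert $a-b$ as a full power series.
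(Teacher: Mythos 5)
Your proof is correct and follows essentially the same route as the paper: the paper's one-line argument also extracts the coefficients of $y^1$ and $y^2$ from the closed form (via $\mathcal{G}_y(x,0)$ and $\mathcal{G}_{yy}(x,0)/2$), and your decomposition $\mathcal{G}(x,y)=y(a+b)+(y-x)b^2/(a-b)$ simply makes that computation explicit.
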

\begin{proof}
	They follow from $\mathcal{G}_y(x,0)$ and $\mathcal{G}_{yy}(x,0)/2$.
\end{proof}

\begin{definition}\label{Gtilde-def}
	For $m, n \geq 1$, we define the sum-product
	\[
		\widetilde{G}_{m,n} \coloneqq  G_{m,n} + \sum_{j=0}^{m-2} G_{n-1, j+2} \cdot G_{m-j-1, 2}.
	\]
	For $m=1$, we put $\widetilde{G}_{1,n} = G_{1,n}$.
\end{definition}
Then we can observe a symmetric property of $\widetilde{G}_{m,n}$, that is, $\widetilde{G}_{m,n} = \widetilde{G}_{n-1,m+1}$ from the following table.
\renewcommand{\arraystretch}{1.5}
\begin{table}[H]
\centering
\begin{tabular}{c||c|c|c|c|c|c}
	$m \backslash n$ & 1 & 2 & 3 & 4 & 5 & 6\\ \hline \hline
	1 & 1 & $-\frac{1}{2}$ & $\frac{1}{3}$ & $-\frac{1}{4}$ & $\frac{1}{5}$ & $-\frac{1}{6}$ \\ \hline	
	2 & $-\frac{1}{2}$ & $\frac{1}{3}$ & $-\frac{1}{12}$ & $\frac{17}{360}$ & $-\frac{23}{720}$ & $\frac{143}{6048}$ \\ \hline	
	3 & $\frac{1}{3}$ & $-\frac{1}{4}$ & $\frac{17}{360}$ & $-\frac{1}{40}$ & $\frac{491}{30240}$ & $-\frac{353}{30240}$ \\ \hline	
	4 & $-\frac{1}{4}$ & $\frac{1}{5}$ & $-\frac{23}{720}$ & $\frac{491}{30240}$ & $-\frac{311}{30240}$ & $\frac{3293}{453600}$ \\ \hline	
	5 & $\frac{1}{5}$ & $-\frac{1}{6}$ & $\frac{143}{6048}$ & $-\frac{353}{30240}$ & $\frac{3293}{453600}$ & $-\frac{2293}{453600}$ \\ \hline
	6 & $-\frac{1}{6}$ & $\frac{1}{7}$ & $-\frac{373}{20160}$ & $\frac{5401}{604800}$ & $-\frac{2207}{403200}$ & $\frac{902863}{239500800}$
\end{tabular}
\caption{Examples of $\widetilde{G}_{m,n}$.}
\end{table}

\begin{lemma}\label{G-til-symmetry}
	We have $\widetilde{G}_{m,1} = G_{m,1} = (-1)^{m-1}/m$ and
	\begin{align*}
		\widetilde{\mathcal{G}}(x,y) &\coloneqq  \sum_{m=1}^\infty \sum_{n=1}^\infty \widetilde{G}_{m,n+1} x^m y^n\\
			&= \frac{y \log(1+x) - x \log(1+y)}{\log(1+x) - \log(1+y)} \left(\frac{y \log(1+x) + x \log(1+y)}{xy} - 1 \right),
	\end{align*}
	which implies the symmetric property $\widetilde{G}_{m,n} = \widetilde{G}_{n-1,m+1}$ for $m \geq 1$ and $n \geq 2$.
\end{lemma}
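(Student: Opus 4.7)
For $\widetilde{G}_{m,1}$, I would note that $\mathcal{G}(0,y) = 0$ directly from \cref{definition:Generalized-Gregory}, so $G_{0,k} = 0$ for every $k \geq 0$. For $m \geq 2$ this makes every summand in the sum defining $\widetilde{G}_{m,1}$ vanish, hence $\widetilde{G}_{m,1} = G_{m,1}$, and the value $(-1)^{m-1}/m$ comes from \cref{Generalized-Gregory-Gregory}.

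For $\widetilde{\mathcal{G}}(x,y)$, the plan is to split $\widetilde{\mathcal{G}} = A + B$ according to the two parts of \cref{Gtilde-def}, namely
\[
A(x,y) = \sum_{m,n \geq 1} G_{m,n+1}\, x^m y^n, \qquad B(x,y) = \sum_{m \geq 2,\, n \geq 1} \sum_{j=0}^{m-2} G_{n,j+2}\, G_{m-j-1,2}\, x^m y^n.
\]
Shifting indices in $A$ and using $G_{m,0} = G_{0,n} = 0$ together with $\sum_{m \geq 1} G_{m,1} x^m = \log(1+x)$ from \cref{Generalized-Gregory-Gregory} gives $A(x,y) = (\mathcal{G}(x,y) - y \log(1+x))/y$. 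In $B$, the substitution $k = j+1$, $l = m - k$ factors the sum as a Cauchy product, which together with the symmetry $\mathcal{G}(x,y) = \mathcal{G}(y,x)$ (clear from \cref{definition:Generalized-Gregory}) and the formula for $\sum_l G_{l,2} x^l$ in \cref{Generalized-Gregory-Gregory} yields $B(x,y) = (1 - x/\log(1+x))(\mathcal{G}(x,y) - x \log(1+y))/x$.

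The core algebraic step is a pair of parallel factorizations: writing $L_x = \log(1+x)$, $L_y = \log(1+y)$, the identity $y L_x^2 - x L_y^2 = L_y(yL_x - xL_y) + yL_x(L_x - L_y) = L_x(yL_x - xL_y) + xL_y(L_x - L_y)$ translates directly into
\[
\mathcal{G}(x,y) - yL_x = \frac{L_y(yL_x - xL_y)}{L_x - L_y}, \qquad \mathcal{G}(x,y) - xL_y = \frac{L_x(yL_x - xL_y)}{L_x - L_y}.
\]
Feeding these into $A + B$ produces a common factor $(yL_x - xL_y)/(L_x - L_y)$ and leaves the residual $L_y/y + L_x/x - 1 = (yL_x + xL_y - xy)/(xy)$, which is exactly the stated closed form. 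This simplification is the main effort, but once the two factorizations above are in hand it is just careful bookkeeping with no hidden cancellations.

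The symmetry conclusion is then immediate: the resulting expression is manifestly invariant under $x \leftrightarrow y$ (the minus signs in $(yL_x - xL_y)/(L_x - L_y)$ cancel, and the other factor is visibly symmetric), so $\widetilde{\mathcal{G}}(x,y) = \widetilde{\mathcal{G}}(y,x)$. Comparing coefficients of $x^m y^n$ gives $\widetilde{G}_{m,n+1} = \widetilde{G}_{n,m+1}$ for $m,n \geq 1$, which is equivalent to the stated symmetry $\widetilde{G}_{m,n} = \widetilde{G}_{n-1,m+1}$ after relabeling.
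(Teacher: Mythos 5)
Your proposal is correct and follows essentially the same route as the paper: split $\widetilde{\mathcal{G}}$ into the two series, rewrite each as $y^{-1}(\mathcal{G}(x,y)-y\log(1+x))$ and $x^{-1}(\mathcal{G}(x,y)-x\log(1+y))\bigl(1-\tfrac{x}{\log(1+x)}\bigr)$ via \cref{Generalized-Gregory-Gregory} and the symmetry of $\mathcal{G}$, then simplify using the definition of $\mathcal{G}(x,y)$. Your explicit factorizations $\mathcal{G}-yL_x=\tfrac{L_y(yL_x-xL_y)}{L_x-L_y}$ and $\mathcal{G}-xL_y=\tfrac{L_x(yL_x-xL_y)}{L_x-L_y}$ just spell out the final algebraic step the paper leaves implicit.
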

\begin{proof}
	The first claim immediately follows from the definition. For the second claim, by \cref{Generalized-Gregory-Gregory}, we have
	\begin{align*}
		\sum_{m=1}^\infty \sum_{n=1}^\infty &\widetilde{G}_{m,n+1} x^m y^n = \sum_{m=1}^\infty \sum_{n= 1}^\infty G_{m,n+1} x^m y^n + \sum_{m=2}^\infty \sum_{n= 1}^\infty \sum_{j=0}^{m-2} G_{n, j+2} \cdot G_{m-j-1, 2} x^m y^n\\
		&= y^{-1} \bigg(\sum_{m=1}^\infty \sum_{n=1}^\infty G_{m,n} x^m y^n - \sum_{m=1}^\infty G_{m,1} x^m y \bigg)\\
		&\qquad + x^{-1} \bigg(\sum_{j=1}^\infty \sum_{n=1}^\infty G_{j,n} x^j y^n - \sum_{n=1}^\infty G_{1,n} x y^n \bigg) \sum_{m=1}^\infty G_{m, 2} x^m\\
		&= y^{-1} \bigg(\mathcal{G}(x,y) - y \log(1+x) \bigg) + x^{-1} \bigg(\mathcal{G}(x,y) - x \log(1+y) \bigg) \bigg(1 - \frac{x}{\log(1+x)} \bigg).
	\end{align*}
	By applying the definition of $\mathcal{G}(x,y)$, we obtain the desired result.
\end{proof}
The symmetric property of $\widetilde{G}_{m,n}$ captures the recurrence formula for $C_{i,r}$ as proven in~\cref{theorem:Cir-recurrence-rel}, which shows the coincidence $C_{i,r} = G_{i,r-i+2}$.
\begin{proof}[Proof of \cref{Coefficient=Generalized-Gregory}]
	By referring to \cref{theorem:Cir-recurrence-rel}, it suffices to show that $G_{1,r} = (-1)^{r-1}/r$ and
	\[
		G_{i+1,r-i+1} = G_{i,r-i+2} + \sum_{k=r-i+1}^{r-1} G_{r-i+1,k-r+i+1} \cdot G_{r-k,2} - \sum_{k=i}^{r-1} G_{i,k-i+2} \cdot G_{r-k, 2}.
	\]
	The first claim is already proven in \cref{Generalized-Gregory-Gregory}. The second claim can be rewritten as
	\[
		G_{i+1,j-1} = G_{i,j} + \sum_{k=0}^{i-2} G_{j-1,k+2} \cdot G_{i-k-1,2} - \sum_{k=0}^{j-3} G_{i,k+2} \cdot G_{j-k-2, 2}
	\]
	by setting $j = r - i +2 \geq 2$. This is also equivalent to the expression $\widetilde{G}_{i,j} = \widetilde{G}_{j-1, i+1}$ for $i \geq 1$ and $j \geq 2$, which is already shown in \cref{G-til-symmetry}.
\end{proof}

\section{Some properties of Generalized Gregory coefficients $G_{m,n}$}\label{section5}
This section shows various formulas satisfied by $G_{m,n}$, that is $C_{i,r}$, derived from its generating series.
First, we establish a generalization of the integral expression for the Gregory coefficients,
\[
	G_{m,2} = - \int_0^1 {t \choose m} dt.
\]
Here $m \geq 1$, and ${t \choose m}$ represents the binomial coefficient. To derive the expression for $G_{m,n}$, we recall the Stirling polynomials of the first kind, previously investigated by B\'{e}nyi and the first author~\cite{BenyiMatsusaka2022} from a combinatorial perspective.
\begin{definition}[{\cite[Proposition 2.21]{BenyiMatsusaka2022}}]\label{def-Stirling-poly-1st}
	For integers $n \geq m \geq 0$, we define \emph{the Stirling polynomials of the first kind} $\st{n}{m}_x \in \Z[x]$ by
	\[
		\sum_{m=0}^n \st{n}{m}_x y^m \coloneqq  (x+y)(x+y+1) \cdots (x+y+n-1).
	\]
\end{definition}
The first few examples of the Stirling polynomials of the first kind are listed below.
\renewcommand{\arraystretch}{1.5}
\begin{table}[H]
\centering
\begin{tabular}{c||c|c|c|c|c}
	$n \backslash m$ & 0 & 1 & 2 & 3 & 4 \\ \hline \hline
	0 & 1 & 0 & 0 & $0$ & 0  \\ \hline	
	1 & $x$ & 1 & 0 & 0 & 0  \\ \hline	
	2 & $x^2+x$ & $2x+1$ & 1 & 0 & 0 \\ \hline	
	3 & $x^3 + 3x^2 + 2x$ & $3x^2+6x+2$ & $3x+3$ & $1$ & 0 \\ \hline	
	4 & $x^4+6x^3+11x^2+6x$ & $4x^3+18x^2+22x+6$ & $6x^2+18x+11$ & $4x+6$ & $1$
\end{tabular}
\caption{Examples of $\st{n}{m}_x$.}
\end{table}

\begin{lemma}\label{St1-poly-generating}
	The polynomials satisfy the recursion
	\[
		\st{n+1}{m}_x = \st{n}{m-1}_x + (x+n) \st{n}{m}_x
	\]
	with the initial conditions $\st{0}{0}_x = 1$, $\st{n}{0}_x = x(x+1) \cdots (x+n-1)$, and $\st{0}{m}_x = 0$. We also have
	\[
		\mathcal{S}_m(x,t) \coloneqq  (1-t)^{-x} \frac{(-1)^m}{m!} \log^m(1-t) = \sum_{n=m}^\infty \st{n}{m}_x \frac{t^n}{n!}.
	\]
\end{lemma}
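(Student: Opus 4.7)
The plan is to handle both claims by extracting coefficients of $y^m$ from the defining identity
\[
	(x+y)(x+y+1)\cdots(x+y+n-1) = \sum_{m=0}^{n} \st{n}{m}_x y^m,
\]
combined in the second part with the binomial series for $(1-t)^{-(x+y)}$.

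For the recursion, I would multiply both sides of the defining identity by the next factor $(x+y+n) = (x+n) + y$ to obtain the left-hand side for $n+1$:
\[
	\sum_{m=0}^{n+1} \st{n+1}{m}_x y^m = \Bigl(\sum_{m=0}^{n}\st{n}{m}_x y^m\Bigr)\bigl((x+n) + y\bigr).
\]
Expanding the right-hand side and comparing coefficients of $y^m$ immediately gives $\st{n+1}{m}_x = \st{n}{m-1}_x + (x+n)\st{n}{m}_x$. The initial conditions are read off directly: $\st{0}{m}_x$ comes from the empty product (which equals $1$), giving $\st{0}{0}_x = 1$ and $\st{0}{m}_x = 0$ for $m \geq 1$, while $\st{n}{0}_x$ is the constant term in $y$ of $(x+y)\cdots(x+y+n-1)$, namely $x(x+1)\cdots(x+n-1)$.

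For the generating function identity, I would introduce the bivariate generating series $(1-t)^{-(x+y)}$ and compute it in two ways. On one hand, the generalized binomial series gives
\[
	(1-t)^{-(x+y)} = \sum_{n=0}^\infty (x+y)(x+y+1)\cdots(x+y+n-1)\,\frac{t^n}{n!} = \sum_{n=0}^\infty \sum_{m=0}^n \st{n}{m}_x\,\frac{y^m t^n}{n!},
\]
using \cref{def-Stirling-poly-1st}. On the other hand, factoring as $(1-t)^{-(x+y)} = (1-t)^{-x}\,e^{-y\log(1-t)}$ and expanding the exponential in powers of $y$ yields
\[
	(1-t)^{-(x+y)} = \sum_{m=0}^\infty (1-t)^{-x}\,\frac{(-1)^m}{m!}\log^m(1-t)\,y^m.
\]
Equating the coefficients of $y^m$ in the two expansions yields precisely the asserted formula $\mathcal{S}_m(x,t) = \sum_{n \geq m} \st{n}{m}_x\, t^n/n!$; the sum automatically starts at $n=m$ since $\st{n}{m}_x = 0$ for $n < m$ (the defining product has degree $n$ in $y$).

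There is no real obstacle here; the only minor care is to justify working with the two formal power series in $y$ coefficient-wise, which is legitimate because both sides lie in $\Q[x]\llbracket t \rrbracket \llbracket y \rrbracket$ (or, analytically, are absolutely convergent for $|t|<1$ and all $y$). Both assertions therefore follow immediately once the bivariate generating function is written down.
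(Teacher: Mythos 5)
Your proof is correct, and while your treatment of the recursion is the same as the paper's (multiply the defining product by $(x+y+n)=y+(x+n)$ and compare coefficients of $y^m$), your proof of the generating-function identity takes a genuinely different route. The paper proves $\mathcal{S}_m(x,t)=\sum_{n\ge m}\st{n}{m}_x\frac{t^n}{n!}$ by induction on $m$, using the first-order relation $\bigl((1-t)\frac{d}{dt}-x\bigr)\mathcal{S}_m(x,t)=\mathcal{S}_{m-1}(x,t)$ together with $\mathcal{S}_m(x,0)=0$ for $m\ge 1$ and the base case $\mathcal{S}_0(x,t)=(1-t)^{-x}=\sum_{n\ge 0}x(x+1)\cdots(x+n-1)\frac{t^n}{n!}$; the differential recursion ties the analytic side to the combinatorial recursion for $\st{n}{m}_x$. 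You instead expand the single bivariate series $(1-t)^{-(x+y)}$ in two ways --- by the binomial series in $t$ (which is exactly the paper's base-case identity, applied with exponent $x+y$) and by factoring off $e^{-y\log(1-t)}$ --- and extract the coefficient of $y^m$, obtaining all $m$ at once with no induction. Your argument is arguably more transparent, since it exhibits $\mathcal{S}_m$ directly as a slice of $(1-t)^{-(x+y)}$, whereas the paper's argument stays closer to the recursion structure it has just established; both rest on the same rising-factorial binomial series, and your remark on working coefficient-wise in $\Q[x]\llbracket t\rrbracket\llbracket y\rrbracket$ (note $\log(1-t)$ has no constant term, so $e^{-y\log(1-t)}$ is a legitimate formal series) is all the justification needed.
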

\begin{proof}
	The first claim immediately follows from \cref{def-Stirling-poly-1st} and a simple observation $x+y+n = y + (x+n)$. The second claim follows from 
	\[
		\bigg((1-t) \frac{d}{dt} - x\bigg) \mathcal{S}_m(x,t) = \mathcal{S}_{m-1}(x,t),
	\]
	$\mathcal{S}_m(x,0) = 0$ for $m \geq 1$, and $\mathcal{S}_0(x,t) = (1-t)^{-x} = \sum_{n=0}^\infty x(x+1) \cdots (x+n-1) t^n/n!$.
\end{proof}

\begin{theorem}\label{Gmn-integral-exp}
	For $m \geq 1$ and $n \geq 2$, we have
	\[
		G_{m,n} = \frac{2 (-1)^{n-1}}{n!} \int_0^1 {t \choose m} \st{n}{2}_{t-1} dt.
	\]
\end{theorem}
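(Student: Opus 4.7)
The plan is to derive the formula by expressing the generating series $\mathcal{G}(x,y)$ restricted to indices $(m,n)$ with $m\geq 1$, $n\geq 2$ as an integral, then expanding that integral via the Stirling polynomial generating series from \cref{St1-poly-generating}. Since $G_{0,n}=G_{m,0}=0$ and $\sum_{m\geq 1}G_{m,1}x^m=\log(1+x)$ by \cref{Generalized-Gregory-Gregory}, direct algebraic simplification of the definition of $\mathcal{G}(x,y)$ yields
\[
H(x,y) := \sum_{\substack{m\geq 1\\ n\geq 2}} G_{m,n} x^m y^n = \mathcal{G}(x,y) - y\log(1+x) = y\log(1+y) - \frac{(x-y)\log^2(1+y)}{\log(1+x)-\log(1+y)}.
\]
Thus it suffices to show that the right-hand side of the claimed formula is the $(m,n)$-coefficient of $H(x,y)$.

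Next, I would recast $H(x,y)$ as a single integral. The elementary evaluations
\[
\int_0^1 (1+x)^t(1+y)^{1-t}\,dt = \frac{x-y}{\log(1+x)-\log(1+y)},\qquad \int_0^1 (1+y)^{1-t}\,dt = \frac{y}{\log(1+y)}
\]
(both obtained by direct integration of an exponential in $t$) combine into
\[
H(x,y) = -\log^2(1+y)\int_0^1 \bigl[(1+x)^t - 1\bigr](1+y)^{1-t}\,dt.
\]
Expanding $(1+x)^t-1 = \sum_{m\geq 1}\binom{t}{m}x^m$ separates the $x$ variable, so
\[
H(x,y) = -\sum_{m\geq 1} x^m \int_0^1 \binom{t}{m}(1+y)^{1-t}\log^2(1+y)\,dt.
\]

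Finally, I would expand the $y$-dependent factor using the generating series $\mathcal{S}_2(x,t)$ from \cref{St1-poly-generating}. Substituting $x\mapsto t-1$ and the formal variable $\mapsto -y$ in $\mathcal{S}_2$ gives
\[
(1+y)^{1-t}\log^2(1+y) = \sum_{n\geq 2} \frac{2(-1)^n y^n}{n!}\st{n}{2}_{t-1}.
\]
Plugging this in and matching coefficients of $x^m y^n$ with $H(x,y)=\sum_{m\geq 1,\,n\geq 2}G_{m,n}x^m y^n$ produces the stated identity, the sign $(-1)^{n-1}$ arising from the factor $(-1)^n$ above.

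There is no serious obstacle: the interchange of summation and integration is legitimate on a neighbourhood of the origin (or purely formally in $x,y$), so the only computational care required is the bookkeeping in the simplification $\mathcal{G}(x,y)-y\log(1+x)$ and the correct substitutions into $\mathcal{S}_2$.
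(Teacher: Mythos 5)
Your proposal is correct and follows essentially the same route as the paper: both identify the generating series $\sum_{m\geq 1,\,n\geq 2}G_{m,n}x^my^n$ with $-\log^2(1+y)\int_0^1((1+x)^t-1)(1+y)^{1-t}\,dt$ via the elementary evaluation of $\int_0^1(1+x)^t(1+y)^{1-t}\,dt$ and then expand using $\mathcal{S}_2$ from \cref{St1-poly-generating}. The only difference is direction -- the paper verifies that the claimed coefficients regenerate $\mathcal{G}(x,y)$, while you extract the integral representation from $\mathcal{G}(x,y)-y\log(1+x)$ -- which is an immaterial reorganization of the same computation.
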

\begin{proof}
	By \cref{Generalized-Gregory-Gregory} and \cref{St1-poly-generating}, we have
	\begin{align*}
		&\sum_{m=1}^\infty G_{m,1} x^m y + \sum_{m=1}^\infty \sum_{n=2}^\infty \left( \frac{2(-1)^{n-1}}{n!} \int_0^1 {t \choose m} \st{n}{2}_{t-1} dt \right) x^m y^n\\
			&\qquad = y \log(1+x) - 2 \int_0^1 \left(\sum_{m=1}^\infty {t \choose m} x^m \right) \left(\sum_{n=2}^\infty \st{n}{2}_{t-1} \frac{(-y)^n}{n!} \right) dt\\
			&\qquad = y \log(1+x) - \log^2(1+y) \int_0^1 ((1+x)^t - 1) (1+y)^{1-t} dt\\
			&\qquad = y \log(1+x) - \log^2(1+y) \left(\frac{x-y}{\log(1+x) - \log(1+y)} - \frac{y}{\log(1+y)} \right),
	\end{align*}
	which coincides with $\mathcal{G}(x,y)$.
\end{proof}
We also note that the Stirling polynomials of the first kind with a lower index of 1 are closely related to the sum-product $\widetilde{G}_{m,n}$ defined in \cref{Gtilde-def} in the following sense.
\begin{proposition}
	We define $G_{m,n}^{(1)}$ by the generating series
	\[
		\sum_{m, n \geq 0} G_{m,n}^{(1)} x^m y^n = \frac{y \log(1+x) - x \log(1+y)}{\log(1+x) - \log(1+y)}
	\]
	appeared as a part of $\widetilde{\mathcal{G}}(x,y)$ in \cref{G-til-symmetry}. For $m, n \geq 1$, we have
	\[
		G_{m,n}^{(1)} = \frac{(-1)^n}{n!} \int_0^1 {t \choose m} \st{n}{1}_{t-1} dt.
	\]
\end{proposition}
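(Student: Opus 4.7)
The plan is to mimic the proof of \cref{Gmn-integral-exp} almost verbatim, since the $\st{n}{1}_{t-1}$ case is structurally the same but one degree simpler than the $\st{n}{2}_{t-1}$ case. The target is to show that the double sum
\[
\sum_{m=1}^\infty \sum_{n=1}^\infty \frac{(-1)^n}{n!} \left(\int_0^1 \binom{t}{m} \st{n}{1}_{t-1} dt\right) x^m y^n
\]
equals $\mathcal{G}^{(1)}(x,y) \coloneqq (y\log(1+x) - x\log(1+y))/(\log(1+x) - \log(1+y))$. Note first that the generating series on the right vanishes at $x=0$ and at $y=0$, so $G_{m,0}^{(1)} = G_{0,n}^{(1)} = 0$, and summing only over $m,n \geq 1$ is legitimate.

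First I would swap the order of summation and integration (this is justified for $|x|,|y|$ small enough) and separate the $m$-sum from the $n$-sum, obtaining an integrand of the form
\[
\Bigl(\sum_{m=1}^\infty \binom{t}{m} x^m\Bigr) \Bigl(\sum_{n=1}^\infty \st{n}{1}_{t-1} \frac{(-y)^n}{n!}\Bigr).
\]
The first factor is $(1+x)^t - 1$ by the binomial theorem. For the second factor, I would apply \cref{St1-poly-generating} with $m=1$ and with $x$ replaced by $t-1$ and $t$ replaced by $-y$, giving $\mathcal{S}_1(t-1, -y) = -(1+y)^{1-t}\log(1+y)$.

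Next I would evaluate
\[
\int_0^1 ((1+x)^t - 1)(1+y)^{1-t}\, dt = \frac{x-y}{\log(1+x)-\log(1+y)} - \frac{y}{\log(1+y)},
\]
using the elementary identity $\int_0^1 a^t b^{1-t} dt = (a-b)/\log(a/b)$. Multiplying by $-\log(1+y)$ and simplifying algebraically, the result is
\[
y - \frac{(x-y)\log(1+y)}{\log(1+x)-\log(1+y)} = \frac{y\log(1+x) - x\log(1+y)}{\log(1+x) - \log(1+y)} = \mathcal{G}^{(1)}(x,y),
\]
which matches the generating series defining $G_{m,n}^{(1)}$ and completes the proof.

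There is no real obstacle here; the only small care-points are (i) getting the sign right when specializing \cref{St1-poly-generating} to arguments $(t-1, -y)$, and (ii) correctly reconciling the leftover $y/\log(1+y)$ term with the missing $n=0$ contribution when comparing to $\mathcal{G}^{(1)}(x,y)$. The proof is entirely parallel to that of \cref{Gmn-integral-exp}, with $\log(1+y)$ appearing to the first power instead of the second.
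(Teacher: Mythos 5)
Your proposal is correct and is exactly the argument the paper intends: the paper's proof of this proposition simply says it is ``entirely similar'' to that of \cref{Gmn-integral-exp}, and your computation (binomial series, \cref{St1-poly-generating} with $m=1$ at $(t-1,-y)$, and $\int_0^1 a^t b^{1-t}\,dt=(a-b)/\log(a/b)$) carries that out faithfully, landing on $\bigl(y\log(1+x)-x\log(1+y)\bigr)/\bigl(\log(1+x)-\log(1+y)\bigr)$ as required. The only cosmetic remark is that your worry about ``reconciling the leftover $y/\log(1+y)$ term with the missing $n=0$ contribution'' is moot: since $G^{(1)}_{m,0}=G^{(1)}_{0,n}=0$, the generating series is already supported on $m,n\geq 1$, and your algebra shows the leftover term is absorbed exactly, with no analogue of the separate $n=1$ term that appears in the proof of \cref{Gmn-integral-exp}.
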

\begin{proof}
	The proof is entirely similar to that of \cref{Gmn-integral-exp}.
\end{proof}

\renewcommand{\arraystretch}{1.5}
\begin{table}[H]
\centering
\begin{tabular}{c||c|c|c|c|c|c|c}
	$m \backslash n$ & 0 & 1 & 2 & 3 & 4 & 5 & 6\\ \hline \hline
	0 & 0 & 0 & 0 & $0$ & 0 & 0 & 0\\ \hline	
	1 & 0 & $-\frac{1}{2}$ & $\frac{1}{12}$ & $-\frac{1}{24}$ & $\frac{19}{720}$ & $-\frac{3}{160}$ & $\frac{863}{60480}$ \\ \hline	
	2 & 0 & $\frac{1}{12}$ & $0$ & $-\frac{1}{720}$ & $\frac{1}{720}$ & $-\frac{5}{4032}$ & $\frac{11}{10080}$ \\ \hline	
	3 & 0 & $-\frac{1}{24}$ & $-\frac{1}{720}$ & $\frac{1}{720}$ & $-\frac{17}{15120}$ & $\frac{37}{40320}$ & $-\frac{103}{134400}$ \\ \hline	
	4 & 0 & $\frac{19}{720}$ & $\frac{1}{720}$ & $-\frac{17}{15120}$ & $\frac{13}{15120}$ & $-\frac{2473}{3628800}$ & $\frac{2027}{3628800}$ \\ \hline	
	5 & 0 & $-\frac{3}{160}$ & $-\frac{5}{4032}$ & $\frac{37}{40320}$ & $-\frac{2473}{3628800}$ & $\frac{643}{1209600}$ & $-\frac{25813}{59875200}$ \\ \hline
	6 & 0 & $\frac{863}{60480}$ & $\frac{11}{10080}$ & $-\frac{103}{134400}$ & $\frac{2027}{3628800}$ & $-\frac{25813}{59875200}$ & $\frac{4157}{11975040}$
\end{tabular}
\caption{Examples of $G_{m,n}^{(1)}$.}
\end{table}
Finally, we provide two recurrence formulas for $G_{m,n}$ with symmetry.
\begin{theorem}\label{Gmn-rec-1}
	For $m, n \geq 2$, we have
	\[
		\sum_{k=2}^m \sum_{l=2}^n k! \sts{m}{k} l! \sts{n}{l} G_{k,l} = 1 - \frac{m! n!}{(m+n-1)!},
	\]
	where $\sts{n}{m}$ is the Stirling number of the second kind defined by the recurrence formula 
	\[
		\sts{n+1}{m} = \sts{n}{m-1} + m \sts{n}{m}
	\]
	with the initial conditions $\sts{0}{0} = 1, \sts{n}{0} = \sts{0}{m} = 0$ $(n, m \neq 0)$.
\end{theorem}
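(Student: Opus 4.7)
The plan is to substitute $x = e^X - 1$ and $y = e^Y - 1$ into the generating series of \cref{definition:Generalized-Gregory}. Since $\log(1+x) = X$ and $\log(1+y) = Y$ under this substitution, the closed form collapses to
\[
    \mathcal{G}(e^X - 1, e^Y - 1) = \frac{(e^Y - 1)X^2 - (e^X - 1)Y^2}{X - Y}.
\]
Using the exponential generating series $(e^X - 1)^k = k! \sum_{m \geq k} \sts{m}{k} X^m/m!$ and its analogue in $Y$, the left-hand side expands as
\[
    \mathcal{G}(e^X - 1, e^Y - 1) = \sum_{m, n \geq 0} \frac{X^m Y^n}{m!\, n!} \sum_{k=0}^{m} \sum_{l=0}^{n} k!\, l!\, \sts{m}{k} \sts{n}{l} G_{k,l}.
\]

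On the other hand, writing the numerator of the right-hand side as $\sum_{a \geq 1} (X^2 Y^a - X^a Y^2)/a!$ and applying the elementary identity $(X^p - Y^p)/(X - Y) = \sum_{i+j = p-1} X^i Y^j$, a direct calculation shows that for $m, n \geq 2$ only the $a = m+n-1$ term contributes to $[X^m Y^n]$, producing the coefficient $-1/(m+n-1)!$. Equating coefficients and using $G_{0,l} = G_{k,0} = 0$ to restrict the inner sums to $k, l \geq 1$, I obtain
\[
    \sum_{k=1}^{m} \sum_{l=1}^{n} k!\, l!\, \sts{m}{k} \sts{n}{l} G_{k,l} = -\frac{m!\, n!}{(m+n-1)!} \qquad (m, n \geq 2).
\]

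It remains to pass from this full $k, l \geq 1$ sum to the theorem's $k, l \geq 2$ sum. Using $G_{1,l} = (-1)^{l-1}/l$ from \cref{Generalized-Gregory-Gregory}, the formal power series identity $X = \log(1 + (e^X - 1)) = \sum_{l \geq 1} (-1)^{l-1}(e^X - 1)^l/l$ expands as $X = \sum_{n \geq 1} (X^n/n!) \sum_{l=1}^n l!\, \sts{n}{l} G_{1,l}$, so the inner slice sum equals $1$ when $n = 1$ and vanishes for $n \geq 2$. By the symmetry $G_{k,1} = G_{1,k}$, the same holds for the $l = 1$ slice, while the $(k,l) = (1,1)$ term alone contributes $G_{1,1} = 1$. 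By inclusion-exclusion the theorem's sum equals $-m!\,n!/(m+n-1)! - 0 - 0 + 1 = 1 - m!\,n!/(m+n-1)!$.

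The main obstacle is bookkeeping rather than a genuine technical difficulty: correctly identifying and evaluating the boundary slices $k = 1$ and $l = 1$ so that the summation range matches the theorem's range.
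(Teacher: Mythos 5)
Your argument is correct and is essentially the paper's own proof: both hinge on the substitution $x=e^X-1$, $y=e^Y-1$, which turns the logarithms in $\mathcal{G}$ into $X$ and $Y$, combined with the expansion $(e^X-1)^k/k!=\sum_{m\ge k}\sts{m}{k}X^m/m!$ and coefficient comparison. The only difference is bookkeeping: the paper first strips the $m=1$ and $n=1$ slices at the level of the generating series (obtaining \eqref{Gmn-geq2}) and then substitutes, whereas you substitute into the full $\mathcal{G}(x,y)$ and remove the boundary slices afterwards using $G_{1,l}=(-1)^{l-1}/l$ and the symmetry $G_{k,1}=G_{1,k}$.
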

\begin{proof}
	By combining \cref{definition:Generalized-Gregory} and \cref{Generalized-Gregory-Gregory}, we have
	\begin{align}\label{Gmn-geq2}
		\sum_{m, n \geq 2} G_{m,n} x^m y^n = xy - \frac{(x-y) \log(1+x) \log(1+y)}{\log(1+x) - \log(1+y)}.
	\end{align}
	We note that the Stirling numbers satisfy
	\[
		\frac{(e^t-1)^m}{m!} = \sum_{n=m}^\infty \sts{n}{m} \frac{t^n}{n!},
	\]
	(see~\cite[Proposition 2.6]{ArakawaIbukiyamaKaneko2014}). Then the generating series
	\[
		\sum_{m,n \geq 2} G_{m,n} (e^x-1)^m (e^y-1)^n = (e^x-1)(e^y-1) - \frac{(e^x-e^y) x y}{x-y}
	\]
	implies
	\begin{align*}
		\sum_{m, n \geq 2} \left(\sum_{k=2}^m \sum_{l=2}^n G_{k,l} k!  \sts{m}{k} l! \sts{n}{l} \right) \frac{x^m}{m!} \frac{y^n}{n!} &= \sum_{m, n \geq 1} \frac{x^m}{m!} \frac{y^n}{n!} - \sum_{m,n \geq 1} \frac{x^m y^n}{(m+n-1)!} \\
			&= \sum_{m,n \geq 2} \left(1 - \frac{m!n!}{(m+n-1)!} \right) \frac{x^m}{m!} \frac{y^n}{n!},
	\end{align*}
	which concludes the proof.
\end{proof}

\begin{theorem}\label{Gmn-rec-2}
	For $m, n \geq 2$, we have
	\[
		\sum_{k=2}^m \sum_{l=2}^n \frac{(-1)^{k+l} G_{k,l}}{m-k + n-l + 1} = \frac{1}{m+n-1} - \frac{1}{mn}.
	\]
\end{theorem}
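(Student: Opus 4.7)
The plan is to recognize the left-hand side as the $x^m y^n$ coefficient of a Cauchy product of two generating series, and then to bring both that product and the generating series of the right-hand side into the same closed form. Throughout, I write $A \coloneqq \log(1-x)$ and $B \coloneqq \log(1-y)$.

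First I introduce
\[
H(x,y) \coloneqq \sum_{k,l \geq 2} (-1)^{k+l} G_{k,l}\, x^k y^l, \qquad K(x,y) \coloneqq \sum_{p,q \geq 0} \frac{x^p y^q}{p+q+1}.
\]
Because the conditions $k \geq 2$ and $l \geq 2$ already force $m = k+p \geq 2$ and $n = l+q \geq 2$, the Cauchy product satisfies
\[
[x^m y^n]\, H(x,y)K(x,y) = \sum_{k=2}^{m}\sum_{l=2}^{n} \frac{(-1)^{k+l} G_{k,l}}{m-k+n-l+1}
\]
for all $m,n \geq 2$. Substituting $x \mapsto -x$ and $y \mapsto -y$ into \eqref{Gmn-geq2} gives $H(x,y) = xy + \frac{(x-y)AB}{A-B}$. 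For $K$, I use $\frac{1}{p+q+1} = \int_0^1 t^{p+q}\,dt$ and a partial-fraction decomposition in $t$ to obtain
\[
K(x,y) = \int_0^1 \frac{dt}{(1-tx)(1-ty)} = \frac{B-A}{x-y}.
\]
A direct multiplication then yields the compact form $H(x,y)K(x,y) = -\frac{xy(A-B)}{x-y} - AB$.

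Next I would compute the generating series $R(x,y) \coloneqq \sum_{m,n \geq 2}\bigl(\tfrac{1}{m+n-1} - \tfrac{1}{mn}\bigr) x^m y^n$ in closed form. The piece $\sum \tfrac{x^m y^n}{mn}$ factors as $(A+x)(B+y)$. The piece $\sum \tfrac{x^m y^n}{m+n-1}$ is handled by the integral representation $\frac{1}{m+n-1} = \int_0^1 t^{m+n-2}\,dt$ together with $\sum_{m \geq 2} x^m t^{m-1} = \frac{x^2 t}{1-xt}$, which reduces it to $\int_0^1 \frac{x^2 y^2 t^2\,dt}{(1-xt)(1-yt)}$; using the same partial fractions and the elementary evaluation $\int_0^1 \frac{xt^2}{1-xt}\,dt = -\frac{A}{x^2} - \frac{1}{x} - \frac{1}{2}$ (via a geometric-series expansion), this equals $xy + \frac{x^2 B - y^2 A}{x-y}$. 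Combining the two pieces produces $R(x,y) = \frac{x^2 B - y^2 A}{x-y} - AB - Ay - xB$.

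Matching $R(x,y)$ with $H(x,y)K(x,y)$ then reduces to the polynomial identity
\[
x^2 B - y^2 A - (x-y)(xB + yA) = -xy(A-B),
\]
which is immediate after expanding the left-hand side. The main obstacle is purely bookkeeping in the closed-form computations of $HK$ and $R$; once these are assembled, the conclusion is a one-line algebraic check, so no additional ideas beyond the Cauchy product recognition and the substitution $x \mapsto -x$, $y \mapsto -y$ in \eqref{Gmn-geq2} are needed.
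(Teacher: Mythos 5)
Your proposal is correct and follows essentially the same route as the paper: you recognize the double sum as the coefficient of a Cauchy product of the series \eqref{Gmn-geq2} (with signs absorbed via $x\mapsto -x$, $y\mapsto -y$) with the difference-quotient series $\frac{\log(1-y)-\log(1-x)}{x-y}$, exactly the product the paper forms, and then match it against the generating series of $\frac{1}{m+n-1}-\frac{1}{mn}$. The only cosmetic difference is that you verify the closed forms by integral representations rather than quoting the known expansions directly.
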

\begin{proof}
	By combining the fact
	\[
		\frac{\log(1+x) - \log(1+y)}{x-y} = \sum_{m,n \geq 0} \frac{(-1)^{m+n}}{m+n+1} x^m y^n
	\]
	and \eqref{Gmn-geq2}, we have
	\begin{align*}
		&\sum_{m, n \geq 2} (-1)^{m+n} \sum_{k=2}^m \sum_{l=2}^n \frac{(-1)^{k+l} G_{k,l}}{m-k + n-l + 1} x^m y^n\\
			&\qquad = \sum_{k, l \geq 2} G_{k,l} x^k y^l \sum_{m, n \geq 0} \frac{(-1)^{m+n}}{m + n + 1} x^m y^n\\
			&\qquad = \left(xy - \frac{(x-y)\log(1+x)\log(1+y)}{\log(1+x) - \log(1+y)} \right) \frac{\log(1+x) - \log(1+y)}{x-y}\\
			&\qquad = \frac{\log(1+x) - \log(1+y)}{x-y} xy - \log(1+x) \log(1+y)\\
			&\qquad = \sum_{m, n \geq 1} (-1)^{m+n} \left(\frac{1}{m+n-1} - \frac{1}{mn} \right) x^m y^n,
	\end{align*}
	which concludes the proof.
\end{proof}
These theorems provide generalizations of the recurrence formulas for the original Gregory coefficients $G_n$ defined in \eqref{Gregory-def} by
\[
    \frac{x}{\log(1+x)} = \sum_{n=0}^\infty G_n x^n = 1 + \frac{1}{2}x - \frac{1}{12} x^2 + \frac{1}{24} x^3 - \frac{19}{720}x^4 + \frac{3}{160}x^5 + \cdots.
\]
\begin{corollary}
    For $n \geq 0$, we have
    \[
        \sum_{l=0}^n l! \sts{n}{l} G_l = \frac{1}{n+1}
    \]
    and
    \[
        \sum_{l=0}^n \frac{(-1)^l G_l}{n-l+1} = \delta_{n,0}.
    \]
\end{corollary}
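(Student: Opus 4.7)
The plan is to derive each of the two identities as a specialization of the corresponding recurrence from \cref{section5}, collapsing the double sum to a single sum by taking $n = 2$ so that $G_{k,2}$ recovers the classical Gregory coefficients via \cref{Generalized-Gregory-Gregory}.

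First I would substitute $n = 2$ in \cref{Gmn-rec-1}. Since $l$ is then forced to equal $2$ with $\sts{2}{2} = 1$, and since $G_{k,2} = -G_k$, the identity collapses to
\[
    -2 \sum_{k=2}^m k!\, \sts{m}{k} G_k = 1 - \frac{m!\,2!}{(m+1)!} = \frac{m-1}{m+1}
\]
for $m \geq 2$. I would then reinsert the boundary indices $l = 0$ and $l = 1$ into the Corollary's left-hand side; using $\sts{m}{0} = 0$ and $\sts{m}{1} = 1$ for $m \geq 1$, these contribute $0$ and $G_1 = 1/2$, respectively, and a short arithmetic check turns the displayed relation into $\sum_{l=0}^m l!\, \sts{m}{l} G_l = 1/(m+1)$. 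The remaining small cases $m = 0$ and $m = 1$ follow directly from $G_0 = 1$ and $G_1 = 1/2$.

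For the second identity I would apply the same reduction to \cref{Gmn-rec-2}. Setting $n = 2$ again kills every term except $l = 2$, and $G_{k,2} = -G_k$ turns the theorem into
\[
    -\sum_{k=2}^m \frac{(-1)^k G_k}{m-k+1} = \frac{1}{m+1} - \frac{1}{2m}
\]
for $m \geq 2$. Adding the $l = 0$ and $l = 1$ contributions to the Corollary's left-hand side, which are $1/(m+1)$ and $-1/(2m)$, exactly cancels the right-hand side above, yielding $0 = \delta_{m,0}$ for $m \geq 2$; the cases $m = 0, 1$ are immediate.

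I do not foresee a real obstacle: both reductions are essentially mechanical once the $n = 2$ slice is taken. The only point that requires care is that \cref{Gmn-rec-1} and \cref{Gmn-rec-2} are stated only for $m, n \geq 2$, so the small-index boundary terms $l \in \{0, 1\}$ and the base cases $m \in \{0, 1\}$ must be handled by hand rather than being absorbed into the parent recurrences.
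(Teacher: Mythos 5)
Your proposal is correct and matches the paper's own argument: the paper likewise specializes \cref{Gmn-rec-1} and \cref{Gmn-rec-2} at the index-$2$ slice (it fixes $m=2$ rather than $n=2$, which is the same by the symmetry $G_{m,n}=G_{n,m}$) and uses $G_n=-G_{2,n}$ for $n\geq 1$ together with $G_0=1$. Your explicit handling of the boundary terms $l\in\{0,1\}$ and the base cases $m\in\{0,1\}$ is exactly the bookkeeping the paper leaves implicit.
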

\begin{proof}
    The claims immediately follow from applying \cref{Gmn-rec-1} and \cref{Gmn-rec-2} for $m=2$. Here we note that $G_n = -G_{2,n}$ for $n \geq 1$ and $G_0 = 1$.
\end{proof}

\section{Asymptotic coefficients of Hurwitz multiple zeta functions at the origin}\label{section6}
In the previous discussion, we considered the coefficients of the asymptotic behavior of the multiple zeta functions at the origin. 
We showed that the primitive elements are identical to the generalized Gregory coefficients and found several formulas.
In this last section, we generalize them for the Hurwitz multiple zeta functions defined by
\begin{align*}
    \zeta(s_{1},\ldots,s_{r};a_{1},\ldots,a_r) \coloneqq \sum_{0\le m_{1},\ldots,m_{r}}\frac{1}{(m_{1}+a_{1})^{s_{1}}\cdots(m_{1}+\cdots+m_{r}+a_{1}+\cdots+a_{r})^{s_{r}}},
\end{align*}
where $-\pi<\arg(m_1+\cdots+m_j+a_1+\cdots+a_j)\le\pi$ $(1\le j\le r)$. 

According to \cite[Theorem 1.1]{MuraharaOnozuka2022}, \cref{theorem:a} was generalized to
\begin{align*}
	&\zeta(\epsilon_1, \dots, \epsilon_r; a_1, \dots, a_r)\\
	&= \sum_{d_1, \dots, d_{r-1} \in \{0,1\}} C^{(d_1, \dots, d_{r-1})}(a_1, \dots, a_r) \prod_{\substack{1 \leq j \leq r-1 \\ d_j=1}} \frac{\epsilon_{j+1} + \cdots + \epsilon_r}{\epsilon_j + \cdots + \epsilon_r} + \sum_{j=1}^r O(|\epsilon_j|),
\end{align*}
where
\[
	C^{(d_1, \dots, d_{r-1})}(a_1, \dots, a_r) \coloneqq (-1)^r \sum_{(n_1, \dots, n_r) \in S^{(d_1, \dots, d_{r-1})}} \frac{B_{n_1}(a_1)}{n_1!} \cdots \frac{B_{n_r}(a_r)}{n_r!}
\]
and $B_n(a)$ is the $n$-th Bernoulli polynomial defined by
\[
	\sum_{n=0}^\infty B_n(a) \frac{x^n}{n!} = \frac{x e^{ax}}{e^x-1}.
\]
It immediately follows that $C^{(d_1, \dots, d_{r-1})} (1, \dots, 1) = C^{(d_1, \dots, d_{r-1})}$. From the perspective of the symmetric property shown in \cref{Reversing-n2-nr}, in the remainder of this section, we consider only the diagonal case of $a = a_1 = \cdots = a_r$ and parallelly generalize the contents from \cref{section2} to \cref{section4}.

First, natural generalizations of \cref{Set-decomp} and \cref{decomposition-formula} hold with the exact same proof. Thus, in this case as well, our focus is reduced to the primitive case
\[
	C_{i,r}(a) \coloneqq C^{(1,\dots, 1, 0, \dots, 0)}(a, \dots, a),
\]
where the number of $1$ in the superscript is $(i-1)$ for $1 \leq i \leq r$. When distinguishing variables, we express $C_{i,r}(a_1, \dots, a_r) = C^{(1,\dots,1,0,\dots,0)}(a_1, \dots, a_r)$. The first few examples are listed below.

\renewcommand{\arraystretch}{1.5}
\begin{table}[H]
\centering
\begin{tabular}{c||c|c|c}
	$r \backslash i$ & 1 & 2 & 3 \\ \hline \hline
	1 & $-a+\frac{1}{2}$ &  \\ \hline
	2 & $\frac{3}{2}a^2 - \frac{3}{2}a + \frac{1}{3}$ & $\frac{1}{2} a^2 - \frac{1}{2} a + \frac{1}{12}$ \\ \hline	
	3 & $-\frac{8}{3}a^3 + 4a^2 - \frac{11}{6}a + \frac{1}{4}$ & $-\frac{2}{3}a^3 + a^2 - \frac{5}{12}a + \frac{1}{24}$ & $-\frac{2}{3}a^3 + a^2 - \frac{5}{12}a + \frac{1}{24}$ 
\end{tabular}
\caption{Examples of $C_{i,r}(a)$.}
\label{List-Cir(a)}
\end{table}
Then, we have the following recurrence relation.
\begin{theorem}[A generalization of \cref{theorem:Cir-recurrence-rel}]\label{Cir(a)-rec}
	For $1 \leq i < r$, we have the recurrence relation
	\[
		C_{i+1,r}(a) = C_{i,r}(a) + \sum_{k=r-i+1}^{r-1} C_{r-i+1,k}(a) \cdot C_{r-k, r-k}(a) - \sum_{k=i}^{r-1} C_{i,k}(a) \cdot C_{r-k,r-k}(a)
	\]
	with initial values $C_{1,r}(a)$.
\end{theorem}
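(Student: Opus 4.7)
The plan is to observe that the proof of \cref{theorem:Cir-recurrence-rel} is almost entirely set-theoretic, so it transfers to the Hurwitz setting once one checks that the two structural building blocks, \cref{Set-decomp} and the reversal bijection of \cref{Reversing-n2-nr}, continue to respect the coefficient function. The diagonal assumption $a_1=\cdots=a_r=a$ is exactly what makes these work: the only place where the proofs used symmetry among coordinates was in applying a permutation to a tuple $(n_1,\dots,n_r)$, and in the diagonal case each coordinate carries the same weight $B_{n_j}(a)/n_j!$.

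Concretely, I would first introduce the analogue
\[
    C(S;a) \coloneqq (-1)^r \sum_{(n_1,\dots,n_r)\in S} \frac{B_{n_1}(a)}{n_1!}\cdots\frac{B_{n_r}(a)}{n_r!}
\]
for a finite set $S\subset\Z_{\geq 0}^r$, so that $C_{i,r}(a)=C(S_{i,r,r};a)$. Then I would verify the Hurwitz analogue of \cref{Set-decomp}: part (i) (permutation invariance) holds because the product $\prod_j B_{n_j}(a)/n_j!$ depends only on the multiset $\{n_1,\dots,n_r\}$ when every coordinate is weighted by the same Bernoulli polynomial $B_\bullet(a)$, and part (ii) (product decomposition) is formal. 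The purely combinatorial \cref{Cir-C+C0} needs no change at all, since it is a disjoint union statement about $S_{i,k,r}$.

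Next I would revisit \cref{C0-product}: the bijection $S_{i,k,r}^0\cong S_{i,k,k}\times S_{r-k,r-k,r-k}$ is still the same, but one has to confirm that reversing the order of the second block of coordinates preserves $C(\,\cdot\,;a)$; this is immediate in the diagonal case by part (i) of the Hurwitz \cref{Set-decomp}. The same remark yields the Hurwitz analogue of \cref{Reversing-n2-nr}, giving
\[
    C_{i+1,r}(a) = C_{r-i+1,r}(a), \qquad C_{i,i,r}(a) = C_{r-i+1,r-i+1,r}(a),
\]
where $C_{i,k,r}(a)$ and $C^0_{i,k,r}(a)$ are defined exactly as in \cref{section3} with $C$ replaced by $C(\,\cdot\,;a)$. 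With these ingredients in place, the computation from the proof of \cref{theorem:Cir-recurrence-rel} transfers verbatim: apply \cref{Cir-C+C0} to expand $C_{i+1,r}(a)=C_{r-i+1,r}(a)$ and $C_{i,r}(a)$ in terms of $C_{i,i,r}(a)$ and the $C^0$-terms, subtract to eliminate $C_{i,i,r}(a)$, and substitute the product formula from \cref{C0-product}.

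The main obstacle, such as it is, is conceptual rather than technical: the results of \cref{section2} and \cref{section3} treat $(n_1,\dots,n_r)$ as an unordered tuple whose entries are interchangeable, so any generalization that breaks this symmetry — in particular the non-diagonal Hurwitz case with distinct $a_j$ — requires reworking every reversal and permutation argument. Restricting to the diagonal case $a_1=\cdots=a_r=a$ sidesteps this issue entirely and makes the recurrence a direct corollary of the earlier proof, with $B_n$ uniformly replaced by $B_n(a)$.
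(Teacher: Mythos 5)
Your proposal is correct and matches the paper's approach: the paper's proof is exactly the observation that the argument for \cref{theorem:Cir-recurrence-rel} is phrased entirely in terms of the sets $S_{i,k,r}$, so it carries over verbatim with $B_n$ replaced by $B_n(a)$ in the diagonal case. Your write-up simply spells out the details (permutation invariance, the disjoint-union and product lemmas) that the paper leaves implicit.
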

\begin{proof}
	Since the proof of \cref{theorem:Cir-recurrence-rel} is given in terms of $S_{i,k,r}$, the exact same proof works as well. 
\end{proof}

Next, to generalize \cref{definition:Generalized-Gregory}, we introduce a generalization of the logarithm function.
\begin{definition}
    We define a formal power series
    \[
        L(a,t) = \sum_{n=1}^\infty \lambda_n(a) t^n \in \Q[a]\llbracket t \rrbracket
    \]
    by $e^{L(a,t)}(1 - te^{(a-1)L(a,t)})=1$. In other words, $L(a,t)$ is the inverse function of $e^{-at}(e^t-1)$.
\end{definition}
Note that $L(1,t) = -\log(1-t)$, that is, $\lambda_n(1) = 1/n$, and the first few examples are given by
\begin{align*}
	e^{L(a,t)} (1 - t e^{(a-1)L(a,t)}) = 1 + (\lambda_1(a)-1) t + (2\lambda_2(a) + \lambda_1(a)^2 - 2a \lambda_1(a))\frac{t^2}{2!} + \cdots,
\end{align*}
that is,
\begin{align*}
	\lambda_1(a) &= 1,\\
	\lambda_2(a) &= a- \frac{1}{2} = \frac{(2a-1)}{2!},\\
	\lambda_3(a) &= \frac{3}{2} a^2 - \frac{3}{2} a + \frac{1}{3} = \frac{(3a-1)(3a-2)}{3!},\\
	\lambda_4(a) &= \frac{8}{3}a^3 - 4a^2 + \frac{11}{6}a - \frac{1}{4} = \frac{(4a-1)(4a-2)(4a-3)}{4!}.
\end{align*}

By replacing $\log(1+t)$ with $-L(a,-t)$ in \cref{definition:Generalized-Gregory}, we introduce the polynomial generalization of the Gregory coefficient as follows.
\begin{definition}
	We define the \emph{Generalized Gregory polynomials} $G_{m,n}(a)$ by the generating series
	\[
		\mathcal{G}(x,y; a) \coloneqq \sum_{m,n \ge 0} G_{m,n}(a) x^m y^n \coloneqq \frac{y L(a,-x)^2 - x L(a,-y)^2}{-L(a,-x) + L(a,-y)}.
	\]
\end{definition}
The first few examples are listed below.
\renewcommand{\arraystretch}{1.5}
\begin{table}[H]
\centering
\begin{tabular}{c||c|c|c|c}
	$m \backslash n$ & 0 & 1 & 2 & 3 \\ \hline \hline
	0 & 0 & 0 & 0 & $0$ \\ \hline	
	1 & 0 & 1 & $-a + \frac{1}{2}$ & $\frac{3}{2}a^2 - \frac{3}{2}a + \frac{1}{3}$  \\ \hline	
	2 & 0 & $-a + \frac{1}{2}$ & $\frac{1}{2}a^2 - \frac{1}{2}a + \frac{1}{12}$ & $-\frac{2}{3} a^3 + a^2 - \frac{5}{12}a + \frac{1}{24}$  \\ \hline	
	3 & 0 & $\frac{3}{2}a^2 - \frac{3}{2}a + \frac{1}{3}$ & $-\frac{2}{3}a^3 + a^2 - \frac{5}{12}a + \frac{1}{24}$ & $\frac{7}{8}a^4 - \frac{7}{4}a^3 + \frac{7}{6} a^2 - \frac{7}{24}a + \frac{7}{360}$ 
\end{tabular}
\caption{Examples of $G_{m,n}(a)$.}
\end{table}
In this case as well, the symmetric property $G_{m,n}(a) = G_{n,m}(a)$ holds by definition, and we can observe that $C_{i,r}(a) = G_{i,r-i+2}(a)$. As the final result, we show the coincidence.
\begin{theorem}[A generalization of \cref{Coefficient=Generalized-Gregory}]\label{Cir(a)=Gir(a)}
	For $1 \leq i \leq r$, we have $C_{i,r}(a) = G_{i,r-i+2}(a)$.
\end{theorem}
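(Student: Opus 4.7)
The plan is to reuse the inductive strategy from the proof of \cref{Coefficient=Generalized-Gregory}, applying the formal substitution $\log(1+\bullet)\mapsto -L(a,-\bullet)$ throughout. Since \cref{Cir(a)-rec} is identical in shape to \cref{theorem:Cir-recurrence-rel}, it suffices to supply two ingredients: a matching initial condition $C_{1,r}(a) = G_{1,r+1}(a)$ for $r\ge 1$, and a polynomial analogue of the symmetry lemma \cref{G-til-symmetry}. The induction on $r$, and within each $r$ the derivation of $C_{i+1,r}(a)$ from $C_{i,r}(a)$ via the recurrence, then goes through verbatim.

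For the initial condition, extracting $\partial_x \mathcal{G}(x,y;a)|_{x=0}$ from the definition of $\mathcal{G}(x,y;a)$ gives $\sum_{n\ge 1} G_{1,n}(a)\,y^n = -L(a,-y)$, so $G_{1,r+1}(a) = (-1)^r \lambda_{r+1}(a)$. To identify this with $C_{1,r}(a)$, I would recognize $L(a,t)$ as the compositional inverse of $\phi(u) \coloneqq e^{-au}(e^u-1)$ and use the identity $u/\phi(u) = ue^{au}/(e^u-1) = \sum_{k\ge 0} B_k(a) u^k/k!$ to apply Lagrange inversion:
\[
    \lambda_{r+1}(a) = \frac{1}{r+1}\sum_{\substack{k_0,\ldots,k_r\ge 0\\k_0+\cdots+k_r=r}}\prod_{j=0}^r \frac{B_{k_j}(a)}{k_j!}.
\]
By the Dvoretzky--Motzkin cycle lemma, each cyclic class of ordered $(r+1)$-tuples of nonnegative integers summing to $r$ contains exactly one representative $(0,n_1,\ldots,n_r)$ satisfying $n_1+\cdots+n_j\le j$ for $j=1,\ldots,r$. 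Dropping the leading $0$ and applying the reversal $(n_1,\ldots,n_r)\mapsto(n_r,\ldots,n_1)$ yields a product-preserving bijection onto $S^{(0,\ldots,0)}$, so $\lambda_{r+1}(a) = (-1)^r C_{1,r}(a)$.

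For the symmetry step, I would define $\widetilde G_{m,n}(a) \coloneqq G_{m,n}(a) + \sum_{j=0}^{m-2} G_{n-1,j+2}(a)\cdot G_{m-j-1,2}(a)$ for $m\ge 2$ and $\widetilde G_{1,n}(a) \coloneqq G_{1,n}(a)$, in analogy with \cref{Gtilde-def}. The two required generating-function inputs $\sum_{m\ge 1}G_{m,1}(a)x^m = -L(a,-x)$ and $\sum_{m\ge 1}G_{m,2}(a)x^m = 1 + x/L(a,-x)$ follow from $\partial_y\mathcal{G}(x,y;a)|_{y=0}$ and $\tfrac12\partial_y^2\mathcal{G}(x,y;a)|_{y=0}$ respectively. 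Substituting these into the algebraic manipulation of \cref{G-til-symmetry} then yields
\[
    \widetilde{\mathcal{G}}(x,y;a) \coloneqq \sum_{m,n\ge 1}\widetilde G_{m,n+1}(a)\,x^m y^n = \frac{yL(a,-x) - xL(a,-y)}{L(a,-x) - L(a,-y)}\left(-\frac{yL(a,-x) + xL(a,-y)}{xy} - 1\right),
\]
which is manifestly invariant under $x \leftrightarrow y$, hence $\widetilde G_{m,n}(a) = \widetilde G_{n-1,m+1}(a)$. As in the proof of \cref{Coefficient=Generalized-Gregory}, this identity is exactly the recurrence of \cref{Cir(a)-rec} after the substitution $j = r-i+2$, which closes the induction.

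The main obstacle is the initial condition: in the $a=1$ case \cref{initial-condition} was immediate from the Akiyama--Tanigawa limit formula, but for the polynomial $\lambda_{r+1}(a)$ the combinatorial identification with $C_{1,r}(a)$ must be carried out by hand, and the Lagrange inversion plus cycle-lemma argument carries the substantive content of the proof; the remainder is a routine transcription of the $a=1$ arguments under the substitution $\log(1+\bullet)\mapsto -L(a,-\bullet)$.
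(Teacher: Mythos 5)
Your proposal is correct, and its overall skeleton matches the paper's: both reduce \cref{Cir(a)=Gir(a)} to the recurrence \cref{Cir(a)-rec}, the symmetry $\widetilde{G}_{m,n}(a)=\widetilde{G}_{n-1,m+1}(a)$ obtained by transcribing \cref{G-til-symmetry} under $\log(1+\bullet)\mapsto -L(a,-\bullet)$ (this is exactly the paper's \cref{Generalization of Gtil}), and the initial condition $C_{1,r}(a)=G_{1,r+1}(a)$. Where you genuinely diverge is in proving that initial condition, which the paper isolates as \cref{GGP-initial-condition}: the paper differentiates in $a$, showing via $L_a(a,t)=tL_t(a,t)L(a,t)$ that $\lambda_{r+1}'(a)$ satisfies a quadratic convolution recurrence (\cref{lambda-rec}), and via bijections among the auxiliary sets $\overline{S}_{k,r}$ together with $B_n'(a)=nB_{n-1}(a)$ that $C_{1,r}(a)$ satisfies the same recurrence (\cref{C(a)-der-rec}), then matches values at $a=0$ using $B_n(0)=(-1)^nB_n$ and the Akiyama--Tanigawa limit (\cref{initial-condition}). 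You instead evaluate $\lambda_{r+1}(a)$ in closed form by Lagrange inversion applied to the compositional inverse of $e^{-au}(e^u-1)$, obtaining $\frac{1}{r+1}\sum_{k_0+\cdots+k_r=r}\prod_j B_{k_j}(a)/k_j!$, and then use the Dvoretzky--Motzkin cycle lemma plus the reversal bijection to cut this symmetric sum down to the constrained sum over $S_r$; I checked the dominance condition, the reversal onto $S^{(0,\dots,0)}$, and the orbit count (no composition of $r$ into $r+1$ parts has a nontrivial cyclic period since $\gcd(r,r+1)=1$, a point worth stating explicitly), and the argument is sound. Your route is more direct and self-contained in that it needs neither the differential recurrences nor the $a=0$ specialization---it even reproves \cref{initial-condition} as a by-product---at the cost of importing Lagrange inversion and the cycle lemma; the paper's route stays entirely within elementary manipulations of Bernoulli polynomials and the sets $S_{i,k,r}$, but must verify two separate recurrences and their boundary data.
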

To prove this theorem, we need to generalize some lemmas.
\begin{lemma}[A generalization of \cref{Generalized-Gregory-Gregory}]
	We have
	\[
		\sum_{m=1}^\infty G_{m,1}(a) x^m = -L(a,-x) = \sum_{m=1}^\infty (-1)^{m-1} \lambda_m(a) x^m
	\]
	and
	\[
		1 - \sum_{m=1}^\infty G_{m,2}(a) x^m = \frac{x}{-L(a,-x)}.
	\]
\end{lemma}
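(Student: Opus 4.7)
The plan is to mirror the proof of \cref{Generalized-Gregory-Gregory} directly: extract the coefficients of $y$ and $y^2$ in $\mathcal{G}(x,y;a)$ by computing $\mathcal{G}_y(x,0;a)$ and $\tfrac{1}{2}\mathcal{G}_{yy}(x,0;a)$. The second formula $-L(a,-x) = \sum_{m\geq 1}(-1)^{m-1}\lambda_m(a) x^m$ is immediate from the defining expansion $L(a,t) = \sum_{n\geq 1}\lambda_n(a)t^n$, so the real content lies in the two generating series identities, for which we mimic the earlier argument with $-L(a,-x)$ playing the role of $\log(1+x)$.

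Set $u = -L(a,-x)$ and $v = -L(a,-y)$. Since $\lambda_1(a) = 1$, we have $u = x + O(x^2)$, $v(0) = 0$ and $v_y(0) = 1$; in particular $u$ is a nonzero element of the integral domain $\Q[a]\llbracket x \rrbracket$. The definition of $\mathcal{G}$ becomes the identity
\[
\mathcal{G}(x,y;a) \cdot (u - v) = y u^2 - x v^2
\]
in $\Q[a]\llbracket x, y \rrbracket$; the quotient is a well-defined power series because the factorisation $u - v = (x - y)\cdot(1 + O(x,y))$ combines with the divisibility of $yu^2 - xv^2$ by $x - y$ (visible on setting $x = y$) to cancel the apparent pole. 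Setting $y = 0$ in the identity gives $u \cdot \mathcal{G}(x,0;a) = 0$, and cancellation forces $\mathcal{G}(x,0;a) = 0$. Differentiating the identity once in $y$ and evaluating at $y = 0$ yields $\mathcal{G}_y(x,0;a) \cdot u = u^2$, and cancelling $u$ gives $\sum_{m \geq 1} G_{m,1}(a) x^m = \mathcal{G}_y(x,0;a) = u = -L(a,-x)$, proving the first claim.

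For the second claim, differentiating the identity twice in $y$ and evaluating at $y=0$ yields
\[
\mathcal{G}_{yy}(x,0;a) \cdot u - 2\,\mathcal{G}_y(x,0;a) = -2x.
\]
Substituting $\mathcal{G}_y(x,0;a) = u$ from the previous step gives $u \cdot \mathcal{G}_{yy}(x,0;a) = 2u - 2x$, whence $\tfrac{1}{2}\mathcal{G}_{yy}(x,0;a) = 1 - x/u$, which rearranges to
\[
1 - \sum_{m \geq 1} G_{m,2}(a) x^m = \frac{x}{-L(a,-x)},
\]
as required. No genuine obstacle is anticipated; the only point requiring care is justifying the cancellation of the non-unit $u$ in the integral domain $\Q[a]\llbracket x \rrbracket$, which is valid because $u \neq 0$. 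The calculation is a line-by-line analogue of the $a = 1$ case in \cref{Generalized-Gregory-Gregory}.
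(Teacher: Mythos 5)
Your proposal is correct and follows essentially the same route as the paper, which likewise obtains both identities by extracting the $y$- and $y^2$-coefficients of $\mathcal{G}(x,y;a)$ via $\mathcal{G}_y(x,0;a)$ and $\mathcal{G}_{yy}(x,0;a)/2$. The only cosmetic difference is that you clear the denominator $-L(a,-x)+L(a,-y)$ and cancel in the integral domain $\Q[a]\llbracket x\rrbracket$ (using $\lambda_1(a)=1$), whereas the paper records the closed form $L_t(a,t)=1/(e^{(1-a)L(a,t)}-at)$ before performing the same evaluation.
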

\begin{proof}
	By taking a derivative of both sides of $e^{L(a,t)} (1 - te^{(a-1)L(a,t)}) = 1$ with respect to $t$, we have
	\[
		L_t(a,t) = \frac{1}{e^{(1-a)L(a,t)} - at}.
	\]
	Then, the claims follow from $\mathcal{G}_y(x,0; a)$ and $\mathcal{G}_{yy}(x,0; a)/2$.
\end{proof}

\begin{lemma}[A generalization of \cref{G-til-symmetry}]\label{Generalization of Gtil}
	For $m, n \geq 1$, we define
	\[
		\widetilde{G}_{m,n}(a) := G_{m,n}(a) + \sum_{j=0}^{m-2} G_{n-1, j+2}(a) \cdot G_{m-j-1, 2}(a).
	\]
	Then we have
	\begin{align*}
		\widetilde{\mathcal{G}}(x,y; a) &\coloneqq \sum_{m=1}^\infty \sum_{n=1}^\infty \widetilde{G}_{m,n+1}(a) x^m y^n\\
			&= \frac{-y L(a,-x) + x L(a,-y)}{-L(a,-x) + L(a,-y)} \left(\frac{-y L(a,-x) - x L(a,-y)}{xy} - 1 \right),
	\end{align*}
	which implies the symmetric property $\widetilde{G}_{m,n}(a) = \widetilde{G}_{n-1, m+1}(a)$ for $m \geq 1$ and $n \geq 2$.
\end{lemma}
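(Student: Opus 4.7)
The plan is to adapt the proof of \cref{G-til-symmetry} verbatim, with $\log(1+x)$ and $\log(1+y)$ replaced systematically by $-L(a,-x)$ and $-L(a,-y)$. The two inputs needed are the generalizations
\[
\sum_{m \geq 1} G_{m,1}(a) x^m = -L(a,-x), \qquad 1 - \sum_{m \geq 1} G_{m,2}(a) x^m = \frac{x}{-L(a,-x)},
\]
established in the preceding lemma, together with the symmetry $G_{m,n}(a) = G_{n,m}(a)$ which is immediate from the definition of $\mathcal{G}(x,y;a)$.

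First, I would expand the defining double series for $\widetilde{\mathcal{G}}(x,y;a)$ using the definition of $\widetilde{G}_{m,n+1}(a)$ and split it as
\begin{align*}
\widetilde{\mathcal{G}}(x,y;a) = \sum_{m,n \geq 1} G_{m,n+1}(a) x^m y^n + \sum_{m \geq 2} \sum_{n \geq 1} \sum_{j=0}^{m-2} G_{n,j+2}(a) G_{m-j-1,2}(a) x^m y^n.
\end{align*}
Shifting the exponent of $y$ in the first piece converts it into $y^{-1}\bigl(\mathcal{G}(x,y;a) - y(-L(a,-x))\bigr)$, using $G_{m,0}(a)=0$ and the formula for $G_{m,1}(a)$. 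Reindexing $k = j+1$ and $l = m-k$ in the second piece exposes a Cauchy product, which together with $G_{m,n}(a) = G_{n,m}(a)$ and the two series identities above becomes
\[
x^{-1}\bigl(\mathcal{G}(x,y;a) - x(-L(a,-y))\bigr) \cdot \Bigl(1 - \frac{x}{-L(a,-x)}\Bigr).
\]

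Next, I would substitute the closed form $\mathcal{G}(x,y;a) = (yL(a,-x)^2 - xL(a,-y)^2)/(-L(a,-x)+L(a,-y))$ and simplify. Writing $A = L(a,-x)$ and $B = L(a,-y)$, a brief calculation gives $\mathcal{G}(x,y;a) + yA = B(yA - xB)/(-A+B)$ and $\mathcal{G}(x,y;a) + xB = A(yA - xB)/(-A+B)$; pulling the common factor $(yA - xB)/(-A+B)$ out of both summands yields the stated closed form.

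Finally, the symmetric property $\widetilde{G}_{m,n}(a) = \widetilde{G}_{n-1,m+1}(a)$ for $m \geq 1$ and $n \geq 2$ follows by observing that swapping $x \leftrightarrow y$ negates both numerator and denominator of the first rational factor (leaving it invariant) and manifestly preserves the second factor, so $\widetilde{\mathcal{G}}(x,y;a) = \widetilde{\mathcal{G}}(y,x;a)$, and comparison of coefficients of $x^m y^n$ yields the identity. The main obstacle is purely bookkeeping: tracking the signs in the substitution $\log(1+\cdot) \leftrightarrow -L(a,-\cdot)$ and verifying that the algebra lands on precisely the asserted form. Since the structural steps of \cref{G-til-symmetry} rely only on the two series identities listed above, no genuinely new idea is required beyond identifying $-L(a,-x)$ as the natural polynomial analogue of $\log(1+x)$.
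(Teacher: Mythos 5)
Your proposal is correct and is essentially the paper's own proof: the paper simply says the argument of \cref{G-til-symmetry} goes through verbatim after replacing $\log(1+t)$ with $-L(a,-t)$, which is exactly the substitution you carry out (and your intermediate identities $\mathcal{G}+yA = B(yA-xB)/(B-A)$, $\mathcal{G}+xB = A(yA-xB)/(B-A)$ and the final symmetry argument all check out).
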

\begin{proof}
	The proof is exactly the same as that of \cref{G-til-symmetry} by replacing $\log(1+t)$ with $-L(a,-t)$.
\end{proof}
As in the previous case, the symmetric property of $\widetilde{G}_{m,n}(a)$ captures the recurrence formula for $C_{i,r}(a)$. The remaining task is to check the coincidence of the initial values.
\begin{theorem}\label{GGP-initial-condition}
	For any $r \geq 1$, we have $C_{1,r}(a) = G_{1,r+1}(a)$, that is,
	\[
		(-1)^r \sum_{(n_1, \dots, n_r) \in S_r} \frac{B_{n_1}(a)}{n_1!} \cdots \frac{B_{n_r}(a)}{n_r!} = (-1)^r \lambda_{r+1}(a),
	\]
	where
	\[
		S_r \coloneqq S_{1,r,r} = \left\{ (n_1, \dots, n_r) \in \Z_{\geq 0}^r : 
		\begin{aligned}  
			&n_1 + \cdots + n_r = r,\\
			&n_{j+1} + \cdots + n_r \leq r-j\quad (1 \leq j < r)
		\end{aligned} \right\}.
	\]
\end{theorem}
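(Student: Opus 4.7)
The plan is to prove the equivalent claim
\[
	\sum_{(n_1, \ldots, n_r) \in S_r} \prod_{i=1}^r \frac{B_{n_i}(a)}{n_i!} = \lambda_{r+1}(a)
\]
by combining Lagrange inversion with a cycle-lemma bijection. Since $L(a, u)$ is defined as the compositional inverse of $\phi(t) = e^{-at}(e^t-1)$, and $\phi(t)/t$ has constant term $1$, Lagrange inversion yields
\[
	\lambda_{r+1}(a) = [u^{r+1}] L(a,u) = \frac{1}{r+1}\,[t^r]\,\Bigl(\frac{t e^{at}}{e^t-1}\Bigr)^{r+1}.
\]
Recognising $\frac{t e^{at}}{e^t-1} = \sum_{k \ge 0} \frac{B_k(a)}{k!} t^k$ and expanding the $(r+1)$-th power coefficientwise, I get
\[
	\lambda_{r+1}(a) = \frac{1}{r+1}\sum_{\substack{k_1 + \cdots + k_{r+1} = r \\ k_i \ge 0}} \prod_{i=1}^{r+1} \frac{B_{k_i}(a)}{k_i!}.
\]

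The second step is to match this sum against $\sum_{S_r} \prod B_{n_i}(a)/n_i!$ via a cycle lemma. Let $\Z/(r+1)\Z$ act by cyclic shift on length-$(r+1)$ compositions of $r$. Because $\gcd(r, r+1) = 1$, no nontrivial rotation can stabilise such a composition, so every orbit has size exactly $r+1$. For each composition, form the walk $d_j = \sum_{i \le j}(k_i - 1)$; then $d_0 = 0$, $d_{r+1} = -1$, and every step $d_j - d_{j-1} = k_j - 1 \ge -1$. I claim that the unique shift sending the start to the smallest index $j^*$ attaining $\min_j d_j$ produces a shifted walk $\tilde d$ with $\tilde d_j \ge 0$ for $0 \le j \le r$ and $\tilde d_{r+1} = -1$. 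Uniqueness is the standard cycle-lemma argument: the defining inequalities force $d_s \le d_j$ for $j > s$ and $d_s < d_j$ for $j < s$, which single out $s = j^*$.

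For this canonical shifted composition $(\tilde k_1, \ldots, \tilde k_{r+1})$, the inequality $\tilde d_r \ge 0$ together with $\tilde d_{r+1} = -1$ and $\tilde k_{r+1} = \tilde d_{r+1} - \tilde d_r + 1 \ge 0$ jointly force $\tilde d_r = 0$ and $\tilde k_{r+1} = 0$. Dropping the trailing $0$, the tuple $(\tilde k_1, \ldots, \tilde k_r)$ sums to $r$ and satisfies $\tilde k_1 + \cdots + \tilde k_j \ge j$ for $1 \le j < r$, i.e.\ it lies in $S_r$; conversely every element of $S_r$ arises this way by appending a $0$. Since each orbit has $r+1$ elements sharing the same product $\prod B_{k_i}(a)/k_i!$ (a multiset invariant, with $B_0(a) = 1$ absorbing the trailing entry), we obtain
\[
	\sum_{k_1 + \cdots + k_{r+1} = r} \prod_{i=1}^{r+1} \frac{B_{k_i}(a)}{k_i!} = (r+1)\sum_{(n_1, \ldots, n_r) \in S_r} \prod_{i=1}^r \frac{B_{n_i}(a)}{n_i!},
\]
and dividing by $r+1$ gives the theorem.

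The main obstacle is the cycle-lemma bookkeeping: verifying that the canonical representative's last entry is automatically $0$ (so that the correspondence with $S_r$ is well defined) and that the walk inequality $d_j \ge 0$ translates precisely into the prefix-sum conditions defining $S_r$. Once this combinatorial correspondence is secured, the remainder of the argument is a routine application of Lagrange inversion to the Bernoulli polynomial generating function.
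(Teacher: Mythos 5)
Your proposal is correct, but it proves the identity by a genuinely different route than the paper. The paper differentiates in $a$: it shows $L_a(a,t)=tL_t(a,t)L(a,t)$, hence $\lambda_{r+1}'(a)=\sum_k k\lambda_{r+1-k}(a)\lambda_k(a)$, establishes the matching recurrence $C_{1,r}'(a)=-\sum_k kC_{1,r-k}(a)C_{1,k-1}(a)$ by a combinatorial decomposition of $S_r$ after applying $B_n'(a)=nB_{n-1}(a)$, and then matches initial values at $a=0$ using the earlier evaluation $C_{1,r}=(-1)^r/(r+1)$, which rests on Akiyama--Tanigawa's analytic limit formula. You instead evaluate both sides in closed form: Lagrange inversion for the compositional inverse of $e^{-at}(e^t-1)$ gives $\lambda_{r+1}(a)=\frac{1}{r+1}\sum_{k_1+\cdots+k_{r+1}=r}\prod_i B_{k_i}(a)/k_i!$, and the Dvoretzky--Motzkin cycle lemma (your walk $d_j=\sum_{i\le j}(k_i-1)$ has steps $\ge -1$ and total $-1$, orbits have full size $r+1$ since the sum $-1$ forbids nontrivial periodicity, and the canonical representative necessarily ends in $\tilde k_{r+1}=0$ because $\tilde d_r\ge 0$ and $\tilde k_{r+1}=-\tilde d_r\ge 0$) shows the unrestricted sum equals $(r+1)$ times the sum over $S_r$, once one notes that, given total $r$, the suffix conditions $n_{j+1}+\cdots+n_r\le r-j$ defining $S_r$ are exactly the prefix conditions $n_1+\cdots+n_j\ge j$; the factor $B_0(a)=1$ absorbs the appended zero. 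All of these steps check out (I verified the Lagrange inversion normalization and the small cases $r=1,2$). What each approach buys: the paper's derivative lemma has independent interest and ties into the structure of Section 6, whereas your argument is purely formal and combinatorial, avoids any appeal to the analytic initial condition from Akiyama--Tanigawa, and makes transparent that the identity is a Łukasiewicz-path/cycle-lemma reformulation of Lagrange inversion for $L(a,t)$.
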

To prove the theorem, we prepare some lemmas. 
\begin{lemma}\label{lambda-rec}
	For $n \geq 1$, we have
	\[
		\lambda'_n(a) = \sum_{k=1}^{n-1} k \lambda_{n-k}(a) \lambda_k(a),
	\]
	$\lambda_n(0) = (-1)^{n-1}/n$, and $\lambda_1(a) = 1$.
\end{lemma}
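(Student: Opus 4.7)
The plan is to identify a functional equation satisfied by $L(a,t)$ relating its partial derivatives, and then extract the recurrence by comparing coefficients.

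First I would recast the defining equation $e^{L}(1-te^{(a-1)L}) = 1$ in the cleaner form
\[
t = e^{-aL(a,t)}\bigl(e^{L(a,t)} - 1\bigr),
\]
which is simply the statement that $L$ is the inverse of $e^{-at}(e^t-1)$. Differentiating this identity with respect to $t$ (with $a$ fixed) recovers the expression
\[
L_t(a,t) = \frac{1}{e^{(1-a)L(a,t)} - at}
\]
already recorded in the proof of \cref{Generalization of Gtil}. Then I would differentiate the same identity with respect to $a$ (with $t$ fixed). Using $L\cdot e^{-aL}(e^L-1) = L\cdot t$, one obtains
\[
0 = -tL + \bigl(e^{(1-a)L} - at\bigr)L_a,
\]
and comparing with the formula for $L_t$ gives the clean identity
\[
L_a(a,t) \;=\; t\,L(a,t)\,L_t(a,t).
\]

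The main recurrence follows by substituting the power series $L(a,t) = \sum_{n\ge 1}\lambda_n(a)t^n$ into this identity. The left-hand side is $\sum_{n\ge 1}\lambda'_n(a)t^n$, while the right-hand side becomes
\[
t\Bigl(\sum_{n\ge 1}\lambda_n(a)t^n\Bigr)\Bigl(\sum_{k\ge 1}k\lambda_k(a)t^{k-1}\Bigr) = \sum_{N\ge 2}\Bigl(\sum_{k=1}^{N-1} k\,\lambda_{N-k}(a)\lambda_k(a)\Bigr)t^N.
\]
Matching coefficients of $t^n$ yields exactly $\lambda'_n(a) = \sum_{k=1}^{n-1} k\,\lambda_{n-k}(a)\lambda_k(a)$ for $n\ge 2$, and $\lambda'_1(a)=0$ (consistent with $\lambda_1(a)$ being constant).

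The two remaining statements are immediate. For $\lambda_1(a)=1$, substitute $L(a,t) = \lambda_1(a) t + O(t^2)$ into $t = e^{-aL}(e^L-1) = L + O(L^2)$ and read off the coefficient of $t$. For $\lambda_n(0) = (-1)^{n-1}/n$, specializing $a=0$ in the defining equation gives $t = e^L - 1$, whence $L(0,t) = \log(1+t) = \sum_{n\ge 1}\frac{(-1)^{n-1}}{n}t^n$. I do not anticipate a serious obstacle; the only subtle point is the implicit differentiation step where one must keep $t$ fixed and rewrite $e^{-aL}(e^L-1)$ as $t$ to collapse terms.
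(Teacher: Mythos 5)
Your proposal is correct and follows essentially the same route as the paper: both derive $L_t(a,t)=1/(e^{(1-a)L}-at)$ and $L_a(a,t)=tL(a,t)L_t(a,t)$ by implicit differentiation of the defining equation, then compare coefficients for the recurrence and specialize to $a=0$ (and to the lowest-order coefficient) for the remaining claims. Your write-up simply spells out the implicit differentiation that the paper leaves as ``by definition.''
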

\begin{proof}
	By definition, we have
	\[
		L_a(a,t) = \frac{t L(a,t)}{e^{(1-a) L(a,t)} - at}, \qquad L_t(a,t) = \frac{1}{e^{(1-a)L(a,t)} - at}
	\]
	which implies that
	\[
		L_a(a,t) = tL_t(a,t)L(a,t).
	\]
	By comparing the coefficients on both sides, we obtain the first claim. The second claim follows from $L(0,t) = \log(1+t)$.
\end{proof}

\begin{lemma}\label{C(a)-der-rec}
	For any $1 \leq l \leq r$, we have
	\[
		\frac{\partial}{\partial a_l} C_{1,r}(a_1, \dots, a_r) = -\sum_{k=l}^r C_{1,k-1}(a_1, \dots, a_{k-1}) C_{1,r-k}(a_{k+1}, \dots, a_r),
	\]
	which implies that
	\[
		C'_{1,r}(a) = -\sum_{l=1}^r \sum_{k=l}^r C_{1,k-1}(a) C_{1,r-k}(a) = -\sum_{k=1}^r k C_{1,r-k}(a) C_{1,k-1}(a),
	\]
	where we put $C_{1,0}(a) \coloneqq 1$.
\end{lemma}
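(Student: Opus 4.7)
The plan is to establish the partial-derivative identity by an explicit bijection on the combinatorial set $S_r = S_{1,r,r}$ defining $C_{1,r}(a_1,\dots,a_r)$, and then deduce the diagonal consequence by summing over $l$ and reorganizing.

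First I would differentiate
\[
C_{1,r}(a_1,\dots,a_r) = (-1)^r \sum_{(n_1,\dots,n_r) \in S_r} \prod_{j=1}^r \frac{B_{n_j}(a_j)}{n_j!}
\]
term-by-term using $B_n'(a) = n B_{n-1}(a)$, and substitute $p_l \coloneqq n_l - 1$ (with $p_j \coloneqq n_j$ for $j \neq l$). This rewrites $\partial_{a_l} C_{1,r}$ as $(-1)^r$ times a sum of Bernoulli products over tuples $(p_1,\dots,p_r) \in \Z_{\geq 0}^r$ with $\sum_j p_j = r-1$, subject to the shifted partial-sum conditions $p_1 + \cdots + p_j \geq j$ for $j < l$ and $p_1 + \cdots + p_j \geq j-1$ for $l \leq j < r$.

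The key step is to attach to each such tuple a canonical split-index: let $k = k(p)$ be the smallest $k$ in $[l,r]$ with $p_1 + \cdots + p_k = k-1$. This $k$ exists because the total is $r-1$, and minimality together with the shifted partial-sum inequalities force $p_k = 0$. Splitting at position $k$ then yields $(p_1,\dots,p_{k-1}) \in S_{k-1}$ and $(p_{k+1},\dots,p_r) \in S_{r-k}$: the shifted inequalities on the left half upgrade to the usual $S_{k-1}$-inequalities thanks to the strict inequalities $p_1 + \cdots + p_j \geq j$ for $l \leq j < k$ (guaranteed by minimality of $k$), while the inequalities for $k < j < r$ translate directly once we subtract the fixed value $k-1$. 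Conversely, gluing any pair in $S_{k-1} \times S_{r-k}$ with a $0$ at position $k \in [l,r]$ produces a $p$-tuple whose split-index is exactly $k$, giving an inverse. Since $B_0(a_k)/0! = 1$ the Bernoulli products are preserved by the insertion, and $-(-1)^{k-1}(-1)^{r-k} = (-1)^r$ takes care of the sign. This proves the partial-derivative formula, and the diagonal identity $C'_{1,r}(a) = -\sum_{k=1}^r k\, C_{1,r-k}(a)\, C_{1,k-1}(a)$ follows by summing over $l=1,\dots,r$ and swapping $\sum_{l=1}^r \sum_{k=l}^r$ into $\sum_{k=1}^r k$.

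I expect the main obstacle to be the bookkeeping in the bijection: specifically, checking that the minimality of $k$ is exactly what promotes the shifted inequality $p_1 + \cdots + p_j \geq j-1$ to the strict $\geq j$ for $l \leq j < k$, since this strictness is precisely what lifts the $p$-tuple conditions to the unshifted $S_{k-1}$-conditions required on the left half. Dually, one must check that inserting a $0$ at a position of a pair in $S_{k-1} \times S_{r-k}$ recovers exactly the same minimal split-index, so the map and its inverse are genuinely mutually inverse.
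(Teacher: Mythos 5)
Your proof is correct and is essentially the paper's own argument: after differentiating via $B_n'(a)=nB_{n-1}(a)$ and shifting $n_l\mapsto n_l-1$, your partition of the resulting set by the minimal split-index $k$ (with $p_k$ forced to be $0$ and the two halves lying in $S_{k-1}\times S_{r-k}$) is exactly the paper's telescoping decomposition through the nested sets $\overline{S}_{l,r}\supset\overline{S}_{l+1,r}\supset\cdots\supset\overline{S}_{r,r}$, just written as a direct disjoint union instead of successive set differences. The sign bookkeeping and the final summation over $l$ giving the factor $k$ also match the paper.
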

\begin{proof}
	For $1 \leq k \leq r$, we define
	\[
		\overline{S}_{k,r}  \coloneqq  \left\{ (n_1, \dots, n_r) \in \Z_{\geq 0}^r : 
		\begin{aligned}  
			&n_1 + \cdots + n_r = r-1,\\
			&n_{j+1} + \cdots + n_r \leq r-j-1 \quad (1 \leq j < k),\\
			&n_{j+1} + \cdots + n_r \leq r-j\quad (k \leq j < r)
		\end{aligned} \right\}.
	\]
	Then we see that $\overline{S}_{r,r} \subset \overline{S}_{r-1,r} \subset \cdots \subset \overline{S}_{1,r}$. First, for any $1 \le l \le r$, we can easily check that the map $\{(n_1, \dots, n_r) \in S_r : n_l > 0\} \to \overline{S}_{l,r}$ defined by
	\[
		(n_1, \dots, n_r) \mapsto (n_1, \dots, n_{l-1}, n_l - 1, n_{l+1}, \dots, n_r)
	\]
	is bijective. Second, for $1 \leq l < r$, we show that the map $\overline{S}_{l, r} \setminus \overline{S}_{l+1,r} \to S_{l-1} \times S_{r-l}$ defined by
	\[
		(n_1, \dots, n_r) \mapsto ((n_1, \dots, n_{l-1}), (n_{l+1}, \dots, n_r))
	\]
	is bijective. Since any $(n_1, \dots, n_r) \in \overline{S}_{l, r} \setminus \overline{S}_{l+1,r}$ satisfies that $n_l + \cdots + n_r \leq r-l$ and $r-l-1 < n_{l+1} + \cdots + n_r \leq r-l$, we have $n_l = 0$ and $n_{l+1} + \cdots + n_r = r-l$. Thus, the map is well-defined and we can easily check that it is bijective. Therefore, by the fact that $B'_n(a) = n B_{n-1}(a)$ and $B_0(a) = 1$, we have
	\begin{align*}
		\frac{\partial}{\partial a_l} &C_{1,r}(a_1, \dots, a_r) = (-1)^r \sum_{(n_1, \dots, n_r) \in \overline{S}_{l,r}} \frac{B_{n_1}(a_1)}{n_1!} \cdots \frac{B_{n_r}(a_r)}{n_r!}\\
			&= (-1)^r \sum_{(n_1, \dots, n_r) \in \overline{S}_{l+1,r}} \frac{B_{n_1}(a_1)}{n_1!} \cdots \frac{B_{n_r}(a_r)}{n_r!} - C_{1,l-1}(a_1, \dots, a_{l-1}) C_{1,r-l}(a_{l+1}, \dots, a_r)\\
			&= \cdots = (-1)^r \sum_{(n_1, \dots, n_r) \in \overline{S}_{r,r}} \frac{B_{n_1}(a_1)}{n_1!} \cdots \frac{B_{n_r}(a_r)}{n_r!} - \sum_{k=l}^{r-1} C_{1,k-1}(a_1, \dots a_{k-1}) C_{1,r-k} (a_{k+1}, \dots, a_r).
	\end{align*}
	Finally, since the map $\overline{S}_{r,r} \to S_{r-1}$ defined by
	\[
		(n_1, \dots, n_r) \mapsto (n_1, \dots, n_{r-1})
	\]
	is bijective, we have
	\[
		(-1)^r \sum_{(n_1, \dots, n_r) \in \overline{S}_{r,r}} \frac{B_{n_1}(a_1)}{n_1!} \cdots \frac{B_{n_r}(a_r)}{n_r!} = - C_{1,r-1}(a_1, \dots, a_{r-1}),
	\]
	which concludes the proof.
\end{proof}

\begin{proof}[Proof of \cref{GGP-initial-condition}]
	By comparing \cref{lambda-rec} with \cref{C(a)-der-rec}, we see that $C_{1,r}(a)$ and $G_{1,r+1}(a) = (-1)^r \lambda_{r+1}(a)$ satisfy the same recurrence relation. Here, the initial conditions that we have to check are $C_{1,0}(a) = \lambda_1(a)$ and $C_{1,r}(0) = (-1)^r \lambda_{r+1}(0)$. The first condition directly follows. The second condition can be checked by \cref{initial-condition} and $B_n(0) = (-1)^n B_n$. More precisely, we have
	\begin{align*}
		C_{1,r}(0) = (-1)^r C_{1,r} = \frac{1}{r+1} = (-1)^r \lambda_{r+1}(0),
	\end{align*}
	which concludes the proof.
\end{proof}

\begin{proof}[Proof of \cref{Cir(a)=Gir(a)}]
	By referring to \cref{Cir(a)-rec}, it suffices to show that $C_{1,r}(a) = G_{1,r+1}(a)$ and $\widetilde{G}_{i,j}(a) = \widetilde{G}_{j-1,i+1}(a)$ for $i \geq 1$ and $j \geq 2$, which are already shown in \cref{GGP-initial-condition} and \cref{Generalization of Gtil}.
\end{proof}
This concludes the present article, and it is worth noting that asymptotic coefficients beyond the origin were not addressed in this article. 
Exploring such coefficients is a potential topic for future research.

\section*{Acknowledgements}

This work was supported by JSPS KAKENHI Grant Numbers JP20K14292, JP21K18141 (Matsusaka), JP22K13897 (Murahara), and JP19K14511 (Onozuka). 
In addition, this work was supported by the Research Institute for Mathematical Sciences, an International Joint Usage/Research Center located in Kyoto University.

\bibliographystyle{amsplain}
\bibliography{References} 

\end{document}